    \newtheorem{thm}{Theorem}                     [section]
    \newtheorem{thm*}{Theorem}
    \newtheorem{prop}[thm]{Proposition}
    \newtheorem{lemma}[thm]{Lemma}
    \newtheorem{cor}[thm]{Corollary}
    \newtheorem{lemma*}{Lemma}    %for AMS \newtheorem*
    \newtheorem{assump}[thm]{Assumption}
    \newtheorem{rems*}{Remark}   %for AMS \newtheorem*
\renewenvironment{proof}{{\it Proof.}}{$\Box$\par} %AMS
\newcommand{\ndef}{\newcommand*}
\def\rndef{\renewcommand}
\ndef{\myaddress}[1]{\begin{center} \it\small #1 \end{center}}
\ndef{\clA}{{\mathcal A}} \ndef{\rmA}{{\mathrm A}} \ndef{\mbA}{{\mathbb A}} \ndef{\bfA}{{\mathbf A}} \ndef{\euA}{{\EuScript A}} \ndef{\frA}{{\mathfrak A}}
\ndef{\clB}{{\mathcal B}} \ndef{\rmB}{{\mathrm B}} \ndef{\mbB}{{\mathbb B}} \ndef{\bfB}{{\mathbf B}} \ndef{\euB}{{\EuScript B}} \ndef{\frB}{{\mathfrak B}}
\ndef{\clC}{{\mathcal C}} \ndef{\rmC}{{\mathrm C}} \ndef{\mbC}{{\mathbb C}} \ndef{\bfC}{{\mathbf C}} \ndef{\euC}{{\EuScript C}} \ndef{\frC}{{\mathfrak C}}
\ndef{\clD}{{\mathcal D}} \ndef{\rmD}{{\mathrm D}} \ndef{\mbD}{{\mathbb D}} \ndef{\bfD}{{\mathbf D}} \ndef{\euD}{{\EuScript D}} \ndef{\frD}{{\mathfrak D}}
\ndef{\clE}{{\mathcal E}} \ndef{\rmE}{{\mathrm E}} \ndef{\mbE}{{\mathbb E}} \ndef{\bfE}{{\mathbf E}} \ndef{\euE}{{\EuScript E}} \ndef{\frE}{{\mathfrak E}}
\ndef{\clF}{{\mathcal F}} \ndef{\rmF}{{\mathrm F}} \ndef{\mbF}{{\mathbb F}} \ndef{\bfF}{{\mathbf F}} \ndef{\euF}{{\EuScript F}} \ndef{\frF}{{\mathfrak F}}
\ndef{\clG}{{\mathcal G}} \ndef{\rmG}{{\mathrm G}} \ndef{\mbG}{{\mathbb G}} \ndef{\bfG}{{\mathbf G}} \ndef{\euG}{{\EuScript G}} \ndef{\frG}{{\mathfrak G}}
\ndef{\clH}{{\mathcal H}} \ndef{\rmH}{{\mathrm H}} \ndef{\mbH}{{\mathbb H}} \ndef{\bfH}{{\mathbf H}} \ndef{\euH}{{\EuScript H}} \ndef{\frH}{{\mathfrak H}}
\ndef{\clI}{{\mathcal I}} \ndef{\rmI}{{\mathrm I}} \ndef{\mbI}{{\mathbb I}} \ndef{\bfI}{{\mathbf I}} \ndef{\euI}{{\EuScript I}} \ndef{\frI}{{\mathfrak I}}
\ndef{\clJ}{{\mathcal J}} \ndef{\rmJ}{{\mathrm J}} \ndef{\mbJ}{{\mathbb J}} \ndef{\bfJ}{{\mathbf J}} \ndef{\euJ}{{\EuScript J}} \ndef{\frJ}{{\mathfrak J}}
\ndef{\clK}{{\mathcal K}} \ndef{\rmK}{{\mathrm K}} \ndef{\mbK}{{\mathbb K}} \ndef{\bfK}{{\mathbf K}} \ndef{\euK}{{\EuScript K}} \ndef{\frK}{{\mathfrak K}}
\ndef{\clL}{{\mathcal L}} \ndef{\rmL}{{\mathrm L}} \ndef{\mbL}{{\mathbb L}} \ndef{\bfL}{{\mathbf L}} \ndef{\euL}{{\EuScript L}} \ndef{\frL}{{\mathfrak L}}
\ndef{\clM}{{\mathcal M}} \ndef{\rmM}{{\mathrm M}} \ndef{\mbM}{{\mathbb M}} \ndef{\bfM}{{\mathbf M}} \ndef{\euM}{{\EuScript M}} \ndef{\frM}{{\mathfrak M}}
\ndef{\clN}{{\mathcal N}} \ndef{\rmN}{{\mathrm N}} \ndef{\mbN}{{\mathbb N}} \ndef{\bfN}{{\mathbf N}} \ndef{\euN}{{\EuScript N}} \ndef{\frN}{{\mathfrak N}}
\ndef{\clO}{{\mathcal O}} \ndef{\rmO}{{\mathrm O}} \ndef{\mbO}{{\mathbb O}} \ndef{\bfO}{{\mathbf O}} \ndef{\euO}{{\EuScript O}} \ndef{\frO}{{\mathfrak O}}
\ndef{\clP}{{\mathcal P}} \ndef{\rmP}{{\mathrm P}} \ndef{\mbP}{{\mathbb P}} \ndef{\bfP}{{\mathbf P}} \ndef{\euP}{{\EuScript P}} \ndef{\frP}{{\mathfrak P}}
\ndef{\clQ}{{\mathcal Q}} \ndef{\rmQ}{{\mathrm Q}} \ndef{\mbQ}{{\mathbb Q}} \ndef{\bfQ}{{\mathbf Q}} \ndef{\euQ}{{\EuScript Q}} \ndef{\frQ}{{\mathfrak Q}}
\ndef{\clR}{{\mathcal R}} \ndef{\rmR}{{\mathrm R}} \ndef{\mbR}{{\mathbb R}} \ndef{\bfR}{{\mathbf R}} \ndef{\euR}{{\EuScript R}} \ndef{\frR}{{\mathfrak R}}
\ndef{\clS}{{\mathcal S}} \ndef{\rmS}{{\mathrm S}} \ndef{\mbS}{{\mathbb S}} \ndef{\bfS}{{\mathbf S}} \ndef{\euS}{{\EuScript S}} \ndef{\frS}{{\mathfrak S}}
\ndef{\clT}{{\mathcal T}} \ndef{\rmT}{{\mathrm T}} \ndef{\mbT}{{\mathbb T}} \ndef{\bfT}{{\mathbf T}} \ndef{\euT}{{\EuScript T}} \ndef{\frT}{{\mathfrak T}}
\ndef{\clU}{{\mathcal U}} \ndef{\rmU}{{\mathrm U}} \ndef{\mbU}{{\mathbb U}} \ndef{\bfU}{{\mathbf U}} \ndef{\euU}{{\EuScript U}} \ndef{\frU}{{\mathfrak U}}
\ndef{\clV}{{\mathcal V}} \ndef{\rmV}{{\mathrm V}} \ndef{\mbV}{{\mathbb V}} \ndef{\bfV}{{\mathbf V}} \ndef{\euV}{{\EuScript V}} \ndef{\frV}{{\mathfrak V}}
\ndef{\clW}{{\mathcal W}} \ndef{\rmW}{{\mathrm W}} \ndef{\mbW}{{\mathbb W}} \ndef{\bfW}{{\mathbf W}} \ndef{\euW}{{\EuScript W}} \ndef{\frW}{{\mathfrak W}}
\ndef{\clX}{{\mathcal X}} \ndef{\rmX}{{\mathrm X}} \ndef{\mbX}{{\mathbb X}} \ndef{\bfX}{{\mathbf X}} \ndef{\euX}{{\EuScript X}} \ndef{\frX}{{\mathfrak X}}
\ndef{\clY}{{\mathcal Y}} \ndef{\rmY}{{\mathrm Y}} \ndef{\mbY}{{\mathbb Y}} \ndef{\bfY}{{\mathbf Y}} \ndef{\euY}{{\EuScript Y}} \ndef{\frY}{{\mathfrak Y}}
\ndef{\clZ}{{\mathcal Z}} \ndef{\rmZ}{{\mathrm Z}} \ndef{\mbZ}{{\mathbb Z}} \ndef{\bfZ}{{\mathbf Z}} \ndef{\euZ}{{\EuScript Z}} \ndef{\frZ}{{\mathfrak Z}}
\ndef{\tA}{{\widetilde A}} \ndef{\tcA}{{\widetilde\clA}} \ndef{\ttcA}{\widetilde{\tcA}} \ndef{\sfA}{{\textsf A}} \ndef{\ttA}{\widetilde{\tA}} \ndef{\dzA}{{A^\sharp}}
\ndef{\tB}{{\widetilde B}} \ndef{\tcB}{{\widetilde\clB}} \ndef{\ttcB}{\widetilde{\tcB}} \ndef{\sfB}{{\textsf B}} \ndef{\ttB}{\widetilde{\tB}} \ndef{\dzB}{{B^\sharp}}
\ndef{\tC}{{\widetilde C}} \ndef{\tcC}{{\widetilde\clC}} \ndef{\ttcC}{\widetilde{\tcC}} \ndef{\sfC}{{\textsf C}} \ndef{\ttC}{\widetilde{\tC}} \ndef{\dzC}{{C^\sharp}}
\ndef{\tD}{{\widetilde D}} \ndef{\tcD}{{\widetilde\clD}} \ndef{\ttcD}{\widetilde{\tcD}} \ndef{\sfD}{{\textsf D}} \ndef{\ttD}{\widetilde{\tD}} \ndef{\dzD}{{D^\sharp}}
\ndef{\tE}{{\widetilde E}} \ndef{\tcE}{{\widetilde\clE}} \ndef{\ttcE}{\widetilde{\tcE}} \ndef{\sfE}{{\textsf E}} \ndef{\ttE}{\widetilde{\tE}} \ndef{\dzE}{{E^\sharp}}
\ndef{\tF}{{\widetilde F}} \ndef{\tcF}{{\widetilde\clF}} \ndef{\ttcF}{\widetilde{\tcF}} \ndef{\sfF}{{\textsf F}} \ndef{\ttF}{\widetilde{\tF}} \ndef{\dzF}{{F^\sharp}}
\ndef{\tG}{{\widetilde G}} \ndef{\tcG}{{\widetilde\clG}} \ndef{\ttcG}{\widetilde{\tcG}} \ndef{\sfG}{{\textsf G}} \ndef{\ttG}{\widetilde{\tG}} \ndef{\dzG}{{G^\sharp}}
\ndef{\tH}{{\widetilde H}} \ndef{\tcH}{{\widetilde\clH}} \ndef{\ttcH}{\widetilde{\tcH}} \ndef{\sfH}{{\textsf H}} \ndef{\ttH}{\widetilde{\tH}} \ndef{\dzH}{{H^\sharp}}
\ndef{\tI}{{\widetilde I}} \ndef{\tcI}{{\widetilde\clI}} \ndef{\ttcI}{\widetilde{\tcI}} \ndef{\sfI}{{\textsf I}} \ndef{\ttI}{\widetilde{\tI}} \ndef{\dzI}{{I^\sharp}}
\ndef{\tJ}{{\widetilde J}} \ndef{\tcJ}{{\widetilde\clJ}} \ndef{\ttcJ}{\widetilde{\tcJ}} \ndef{\sfJ}{{\textsf J}} \ndef{\ttJ}{\widetilde{\tJ}} \ndef{\dzJ}{{J^\sharp}}
\ndef{\tK}{{\widetilde K}} \ndef{\tcK}{{\widetilde\clK}} \ndef{\ttcK}{\widetilde{\tcK}} \ndef{\sfK}{{\textsf K}} \ndef{\ttK}{\widetilde{\tK}} \ndef{\dzK}{{K^\sharp}}
\ndef{\tL}{{\widetilde L}} \ndef{\tcL}{{\widetilde\clL}} \ndef{\ttcL}{\widetilde{\tcL}} \ndef{\sfL}{{\textsf L}} \ndef{\ttL}{\widetilde{\tL}} \ndef{\dzL}{{L^\sharp}}
\ndef{\tM}{{\widetilde M}} \ndef{\tcM}{{\widetilde\clM}} \ndef{\ttcM}{\widetilde{\tcM}} \ndef{\sfM}{{\textsf M}} \ndef{\ttM}{\widetilde{\tM}} \ndef{\dzM}{{M^\sharp}}
\ndef{\tN}{{\widetilde N}} \ndef{\tcN}{{\widetilde\clN}} \ndef{\ttcN}{\widetilde{\tcN}} \ndef{\sfN}{{\textsf N}} \ndef{\ttN}{\widetilde{\tN}} \ndef{\dzN}{{N^\sharp}}
\ndef{\tO}{{\widetilde O}} \ndef{\tcO}{{\widetilde\clO}} \ndef{\ttcO}{\widetilde{\tcO}} \ndef{\sfO}{{\textsf O}} \ndef{\ttO}{\widetilde{\tO}} \ndef{\dzO}{{O^\sharp}}
\ndef{\tP}{{\widetilde P}} \ndef{\tcP}{{\widetilde\clP}} \ndef{\ttcP}{\widetilde{\tcP}} \ndef{\sfP}{{\textsf P}} \ndef{\ttP}{\widetilde{\tP}} \ndef{\dzP}{{P^\sharp}}
\ndef{\tQ}{{\widetilde Q}} \ndef{\tcQ}{{\widetilde\clQ}} \ndef{\ttcQ}{\widetilde{\tcQ}} \ndef{\sfQ}{{\textsf Q}} \ndef{\ttQ}{\widetilde{\tQ}} \ndef{\dzQ}{{Q^\sharp}}
\ndef{\tR}{{\widetilde R}} \ndef{\tcR}{{\widetilde\clR}} \ndef{\ttcR}{\widetilde{\tcR}} \ndef{\sfR}{{\textsf R}} \ndef{\ttR}{\widetilde{\tR}} \ndef{\dzR}{{R^\sharp}}
\ndef{\tS}{{\widetilde S}} \ndef{\tcS}{{\widetilde\clS}} \ndef{\ttcS}{\widetilde{\tcS}} \ndef{\sfS}{{\textsf S}} \ndef{\ttS}{\widetilde{\tS}} \ndef{\dzS}{{S^\sharp}}
\ndef{\tT}{{\widetilde T}} \ndef{\tcT}{{\widetilde\clT}} \ndef{\ttcT}{\widetilde{\tcT}} \ndef{\sfT}{{\textsf T}} \ndef{\ttT}{\widetilde{\tT}} \ndef{\dzT}{{T^\sharp}}
\ndef{\tU}{{\widetilde U}} \ndef{\tcU}{{\widetilde\clU}} \ndef{\ttcU}{\widetilde{\tcU}} \ndef{\sfU}{{\textsf U}} \ndef{\ttU}{\widetilde{\tU}} \ndef{\dzU}{{U^\sharp}}
\ndef{\tV}{{\widetilde V}} \ndef{\tcV}{{\widetilde\clV}} \ndef{\ttcV}{\widetilde{\tcV}} \ndef{\sfV}{{\textsf V}} \ndef{\ttV}{\widetilde{\tV}} \ndef{\dzV}{{V^\sharp}}
\ndef{\tW}{{\widetilde W}} \ndef{\tcW}{{\widetilde\clW}} \ndef{\ttcW}{\widetilde{\tcW}} \ndef{\sfW}{{\textsf W}} \ndef{\ttW}{\widetilde{\tW}} \ndef{\dzW}{{W^\sharp}}
\ndef{\tX}{{\widetilde X}} \ndef{\tcX}{{\widetilde\clX}} \ndef{\ttcX}{\widetilde{\tcX}} \ndef{\sfX}{{\textsf X}} \ndef{\ttX}{\widetilde{\tX}} \ndef{\dzX}{{X^\sharp}}
\ndef{\tY}{{\widetilde Y}} \ndef{\tcY}{{\widetilde\clY}} \ndef{\ttcY}{\widetilde{\tcY}} \ndef{\sfY}{{\textsf Y}} \ndef{\ttY}{\widetilde{\tY}} \ndef{\dzY}{{Y^\sharp}}
\ndef{\tZ}{{\widetilde Z}} \ndef{\tcZ}{{\widetilde\clZ}} \ndef{\ttcZ}{\widetilde{\tcZ}} \ndef{\sfZ}{{\textsf Z}} \ndef{\ttZ}{\widetilde{\tZ}} \ndef{\dzZ}{{Z^\sharp}}
\ndef{\bfc}{{\bf c}}
  \ndef{\eps}{\varepsilon}
\let\geq\geqslant
\let\leq\leqslant
\ndef{\lims}[1]{\lim\limits_{#1}}
\ndef{\sums}[1]{\sum\limits_{#1}}
\ndef{\ints}[1]{\int\limits_{#1}}
\ndef{\sups}[1]{\sup\limits_{#1}}
\ndef{\liminfty}[1]{\lims{#1\to\infty}}
\ndef{\suminf}[1]{\sums{#1=1}^\infty}
\ndef{\limo}[1]{\omega\mbox{-}\!\!\!\lims{#1\to\infty}}          % \omega limit
\ndef{\limL}[1]{\rmL\mbox{-}\!\!\!\lims{#1\to\infty}}            % "L" limit
\ndef{\limLOne}[1]{\clL_1\mbox{-}\!\lims{#1}}
\ndef{\tildelimo}[1]{\tilde\omega\mbox{-}\!\!\!\lims{#1\to\infty}}
\ndef{\slim}{\mathrm{s}\mbox{-}\!\!\lim}          % strong limit
\ndef{\wlim}{\mathrm{w}\mbox{-}\!\lim}          % strong limit
\ndef{\Aut}{\operatorname{Aut}}      % group of automorphisms
\ndef{\Ch}{\operatorname{ch}}        % Chern character
\ndef{\End}{\operatorname{End}}      % group of endomorphisms
\ndef{\Hom}{\operatorname{Hom}}      % group of homomorphisms
\ndef{\Ker}{\operatorname{Ker}}      % kernel of operator
\ndef{\Log}{\operatorname{Log}}      % logarithm
\ndef{\OP}{\operatorname{OP}}        % abstract PDO's
\ndef{\Op}{\operatorname{Op}}        % abstract PDO's
\ndef{\Symb}{\operatorname{Symb}}    % symbol
\ndef{\Tr}{\operatorname{Tr}}        % usual trace
\ndef{\Wres}{\operatorname{Wres}}    % Wodzicki residue
\ndef{\cl}{\operatorname{cl}}        % Clifford
\ndef{\com}{\operatorname{com}}
\ndef{\const}{\operatorname{const}}  % constant
\ndef{\conv}{\operatorname{conv}}    % convex hull
\rndef{\det}{\operatorname{det}}     % determinant
\ndef{\detFK}[1]{\Delta\brs{#1}} % Fuglede-Kadison's determinant
\ndef{\detFKrel}[2]{\Delta_{#2}\brs{#1}} % Fuglede-Kadison's determinant
\ndef{\adj}{\operatorname{adj}}    % diagonal operator
\ndef{\diag}{\operatorname{diag}}    % diagonal operator
\ndef{\dist}{\operatorname{dist}}    % distance
\ndef{\dom}{\operatorname{dom}}      % domain
\ndef{\ec}{\operatorname{ec}}        % essential codimension
\ndef{\id}{1}                        % identity operator
\ndef{\ind}{\operatorname{ind}}      % index
\ndef{\mydeg}{\operatorname{deg}}    % degree of a diff. form
\ndef{\op}{\operatorname{op}}
\ndef{\rank}{\operatorname{rank}}
\ndef{\res}{\operatorname{res}}      % residue
\ndef{\rng}{\operatorname{ran}}      % range
\ndef{\sflow}{\operatorname{sf}}     % spectral flow
\ndef{\isf}{\operatorname{isf}}      % infinitesimal spectral flow
\ndef{\sign}{\operatorname{sign}}    % signum (a la C.-Ph.)
\ndef{\sgn}{\operatorname{sgn}}      % signum (a la Connes)
\ndef{\sing}{\operatorname{sing}}    % singular
\ndef{\supp}{\operatorname{supp}}    % support
\ndef{\tr}{\operatorname{tr}}        % trace
\ndef{\var}{\operatorname{var}}      % variation of measure
\ndef{\vol}{\operatorname{vol}}      % volume or volume form
\ndef{\wn}{\operatorname{wn}}        % winding number
\ndef{\wres}{\operatorname{wres}}    % Wodzicki residue
\rndef{\Im}{\operatorname{Im}}       % imaginary part of an operator
\rndef{\Re}{\operatorname{Re}}       % real part of an operator
\ndef{\prng}[1]{\mathrm R_{#1}} % {[\rng {#1}]}          % projection onto the range of #1
\ndef{\pker}[1]{\mathrm N_{#1}} % {[\ker {#1}]}          % projection onto the kernel of #1
\ndef{\rprng}[2]{\mathrm R_{#1}^{#2}}           % projection onto the relative range of #1
\ndef{\rpker}[2]{\mathrm N_{#1}^{#2}}           % projection onto the relative kernel of #1
\ndef{\rsupp}[1]{\supp_r(#1)}
\ndef{\lsupp}[1]{\supp_l(#1)}
\ndef{\rslv}[1]{R_z(#1)}      % resolvent
\ndef{\HH}{H}                 % initial operator of perturbation theory
\ndef{\tHH}{\tilde \HH}       % final operator of perturbation theory
\ndef{\VV}{V}                 % perturbation operator of perturbation theory
\ndef{\Rz}{R_z}               % resolvent of the initial operator
\ndef{\tRz}{\tR_z}            % resolvent of the final operator
\ndef{\psif}[1]{#1^{[1]}} % {\psi_{#1}}  % divided difference
\ndef{\CPlus}[1]{W_{#1}(\mbR)}
\ndef{\bndl}{\xi}                         % vector bundle
\ndef{\bndlA}{\eta}                       % vector bundle
\ndef{\GlueMap}{\varphi}                  % glue map of a bundle
\ndef{\ChartMap}{h}                       % chart diffeomorphism map of a manifold
\ndef{\hilb}{\clH}                     % Hilbert space
\ndef{\hilba}{\clH^{(a)}}                    % absolutely continuous part of Hilbert space (wrt to a s.a. operator)
   \ndef{\hilbasargument}{(\hilb)} %{(\hilb)}
\ndef{\LpH}[1]{\clL^{#1}\hilbasargument}       % the set of ...
\ndef{\saLpH}[1]{\clL_{sa}^{#1}\hilbasargument}       % the set of ...
\ndef{\clBH}{\clB\hilbasargument}              % the set of BOUNDED linear operators on Hilbert space
\ndef{\ubBH}{\clB_1\hilbasargument}            % the unit ball of the algebra of all BOUNDED linear operators on Hilbert space
\ndef{\clCH}{\clC\hilbasargument}              % the set of CLOSED DENSELY-DEFINED linear operators on Hilbert space
\ndef{\clKH}{\clK\hilbasargument}              % the set of COMPACT operators
\ndef{\clFH}{\clF\hilbasargument}              % the set of BOUNDED FREDHOLM operators
\ndef{\clUH}{\clU\hilbasargument}              % the set of UNITARIES on Hilbert space
\ndef{\clCFH}{{\clC\clF}\hilbasargument}       % the set of CLOSED DENSELY-DEFINED FREDHOLM OPERATORS on Hilbert space
\ndef{\saBH}{\clB_{sa}\hilbasargument}         % the set of S.-A. BOUNDED operators on Hilbert space
\ndef{\saCH}{\clC_{sa}\hilbasargument}         % the set of CLOSED DENSELY-DEFINED S.-A. operators on Hilbert space
\ndef{\saFH}{\clF_{sa}\hilbasargument}         % the set of BOUNDED FREDHOLM s.-a. operators
\ndef{\saKH}{\clK_{sa}\hilbasargument}         % the set of COMPACT S.-A. operators
\ndef{\saCFH}{\clC\clF_{sa}\hilbasargument}    % the set of CLOSED DENSELY-DEFINED S.-A. FREDHOLM operators on Hilbert space
\ndef{\clUFH}{\clU\clF\hilbasargument}         % the set of UNITARIES such that U+I is FREDHOLM
\ndef{\Uinj}{\clU_{inj}\hilbasargument}        % the set of UNITARIES such that U-I is injective
\ndef{\UFinj}{\clU\clF_{inj}\hilbasargument}   % the set of UNITARIES such that U-I is INJECTIVE and U+I is FREDHOLM
\ndef{\spproj}[2]{E^{#1}_{#2}}                      % spectral projection of #1
\ndef{\spprojb}[2]{E^{#2}_{#1}}                     % spectral projection of #1
\ndef{\LpN}[1]{\clL^{#1}(\clN,\tau)}     % noncommutative \mathcal L_p space
\ndef{\saLpN}[1]{\clL^{#1}_{sa}(\clN,\tau)} % s.-a. part of noncommutative \mathcal L_p space
\ndef{\rLpN}[1]{L^{#1}(\clN,\tau)}       % noncommutative L_p space
\ndef{\clAND}{(\clA,\clN,D)}             % spectral triple (A,N,D)
\ndef{\clBA}{{\clB(\clA)}}
\ndef{\saKN}{{\clK_{sa}(\clN,\tau)}}          % s.-a. \tau-compact operators
\ndef{\clKN}{{\clK(\clN,\tau)}}          % \tau-compact operators
\ndef{\clKtN}{{\clK(\tilde\clN,\tau)}}   % \tau-compact (maybe unbounded) operators
\ndef{\clFN}{{\clF(\clN,\tau)}}          % \tau-Fredholm operators
\ndef{\saFN}{{\clF_{sa}(\clN,\tau)}}     % self-adjoint \tau-Fredholm operators
\ndef{\clPN}{\clP(\clN)}                 % projections of N
\ndef{\clQN}{\clQ(\clN,\tau)}            % Calkin algebra N/K
\ndef{\infPN}{{\clP_\tau^\infty(\clN)}}  % infinite projections of N
\ndef{\clOF}[2]{\clF_{#1\mbox{-}#2}(\clN,\tau)}         % relatively Fredholm operators
\ndef{\oind}[2]{{\rm \tau\mbox{-}ind}_{#1\mbox{-}#2}}   % relative index
\ndef{\tind}{\tau\mbox{-}\ind}                  % semifinite index
\ndef{\DInd}{\ind_{\clD,\tau}}           % semifinite index
\ndef{\BF}{Breuer-Fredholm}              % Breuer-Fredholm
\ndef{\skewfred}[2]{$(#1\cdot #2)$ $\tau$\tire Fredholm}   % skew corner Fredholm
\ndef{\affl}{\eta}                       % affiliated
\ndef{\vNa}{von Neumann algebra}         % von Neumann algebra
\ndef{\nsf}{faithful normal semifinite } % normal semifinite faithful
\ndef{\taubrs}[1]{\tau\brackets{#1}}     % n.s.f. trace with brackets
\ndef{\sqbrs}[1]{[#1]}        % brackets
\ndef{\Sqbrs}[1]{\big[#1\big]}        % brackets
\ndef{\SqBrs}[1]{\Big[#1\Big]}        % brackets
\ndef{\domd}{\bigcap\limits_{n\ge 0} \dom\;\delta^n}         % domain of \delta^n's
\ndef{\DiffOP}{{\rm \clD}}
\ndef{\ADA}{\clA \cup [\clD,\clA]}
\ndef{\DixIdeal}[1]{\LpH{#1,\infty}}               % Dixmier ideal
\ndef{\dixideal}{\ell^{1,\infty}}                  % commutative Dixmier ideal
\ndef{\WDixIdeal}{\LpH{1,\mathrm w}}               % weak Dixmier ideal
\ndef{\DixIdealPos}[1]{\DixIdeal{#1}_+}            % positive part of Dixmier ideal
\ndef{\DixIdealN}[1]{\LpN{#1,\infty}}              % semifinite Dixmier ideal
\ndef{\DixIdealNPar}[2]{\clL^{#1,\infty}_{#2}(\clN,\tau)}    % semifinite Dixmier ideal
\ndef{\DixIdealNPos}[1]{\LpN{#1,\infty}_+}                   % positive part of semifinite Dixmier ideal
\ndef{\TrD}{\Tr_\omega}                                      % Dixmier trace
\ndef{\tauD}{{\tau_\omega}}                                  % semifinite Dixmier trace
\ndef{\ILog}{\frac 1{\log(1+t)}}
\ndef{\ILogN}{\frac 1{\log(1+N)}}
\ndef{\DixNorm}[1]{\norm{#1}_{(1,\infty)}}                   % Dixmier norm
\ndef{\DixInt}[1]{\ints 0^t \mu_s(#1)\,ds}
\ndef{\DixIntL}[1]{\ints 0^{\lambda_{1/t}(#1)}\mu_s(#1)\,ds}
    \ndef{\SmallIdeal}{{\clL^{1, \mathrm w}}}
    \ndef{\SmallIdealMeas}{{\clL^{1, \mathrm w}_m}}
    \ndef{\DixIntII}[1]{\ints 0^t \mu_s(#1)\,ds}
    \ndef{\DixIntf}[1]{\Phi_t(#1)}
    \ndef{\DixIntg}[1]{\Psi_t(#1)}
\ndef{\lpi}{\clL^{1,\pi}(\clN,\tau)}
\ndef{\IIinfty}{$\mathrm{II}_\infty$\ }
\ndef{\fourier}[1]{\clF(#1)}          % Fourier transform of #1
\ndef{\HaarMeasBohrs}{\nu}            % Haar measure of the Bohr compact
\ndef{\BrownsMeas}{\mu}               % Brown's measure
\ndef{\BohrCont}[1]{\tilde{#1}}       % continuation of a function to the Bohr compact
\ndef{\APMean}{{M}}                   % mean value of a.p. function
\ndef{\CDSS}{{\clA_B}}                % Coburn-Douglas-Schaeffer-Singer's factor
\ndef{\matr}{{\rm Mat}}               % standard matrix algebra
\ndef{\seque}[1]{\ensuremath{\{#1_j\}_{j=1}^\infty}}    % sequence of numbers  a_1, a_2, ...
\ndef{\sequen}[2]{\ensuremath{\{#1_#2\}_{#2=1}^\infty}}    % sequence of numbers  a_1, a_2, ...
\ndef{\Seque}[1]{\ensuremath{\left(#1_0,#1_1,#1_2,\dots\right)}}    % sequence of numbers  a_1, a_2, ...
\ndef{\Cesaro}{H}                           % the Cesaro operator (on sequences)
\ndef{\CesaroRPlus}{M}                      % the Cesaro operator on positive semiaxis
\ndef{\Dilation}{D}                         % the dilation operator (on sequences)
\ndef{\Shift}{T}                            % the shift operator (on sequences)
\ndef{\norm}[1]{\left\Vert#1\right\Vert}    % norm of #1
\ndef{\TrNorm}[1]{\norm{#1}_1}              % trace norm of #1
\ndef{\HSNorm}[1]{\norm{#1}_2}              % Hilbert-Schmidt norm of #1
\ndef{\InftyNorm}[1]{\norm{#1}_\infty}      % uniform norm of #1
\ndef{\normQN}[1]{\norm{#1}_{\clQN}}        % Calkin norm of #1
\ndef{\clLnorm}[1]{\norm{#1}_{1,\infty}}    % $1,\infty$- trace norm of #1
\ndef{\ccurve}{\gamma}                      % a curve in complex plane for Cauchy integral
\ndef{\abs}[1]{\left\lvert#1\right\rvert}   % absolute value of #1
\ndef{\set}[1]{\left\{#1\right\}}           % set of ...
\ndef{\brackets}[1]{\left(#1\right)}        % brackets
\ndef{\brs}[1]{\brackets{#1}}               % brackets
\ndef{\Brs}[1]{\big(#1\big)}                % brackets
\ndef{\BRS}[1]{\Big(#1\Big)}                % brackets
\ndef{\scal}[2]{\left\la #1,#2\right\ra}               % scalar product
\ndef{\precprec}{\prec\!\!\!\prec}
\ndef{\qeq}{\stackrel?=}
\ndef{\spectrum}[1]{\sigma_{#1}} %{\mathrm{Spec}(#1)}       % spectrum of an operator
\ndef{\spectruma}[1]{\sigma^{(a)}_{#1}} %{\mathrm{Spec}(#1)}       % absolutely continuous spectrum of an operator
\ndef{\numrange}[1]{\mathrm{W}(#1)}                         % numerical range of an operator
\rndef{\emptyset}{\varnothing}                              % empty set
\ndef{\csupp}{c}                           % subscript for compactly supported functions
\ndef{\closure}[1]{\overline{#1}}
\ndef{\linspan}[1]{\mathrm{span}\ {#1}}
\ndef{\bddborel}[1]{B(#1)}                 % the space of bounded Borel functions on the measure space #1
\ndef{\charfunc}{\chi}
\rndef{\ln}{\log}
\ndef{\FrDer}{\euD}                        % Fr\'echet derivative
\ndef{\LieDer}[1]{\pounds_{#1}\,}          % Lie derivative
\ndef{\dds}{\left.\frac d{ds} \right|_{s = 0}}
\ndef{\ortcmp}[1]{#1^{\scriptscriptstyle \perp}}            % orthogonal complement of projection #1
\ndef{\Laplace}{\Delta}                    % Laplace operator
\ndef{\matrPQ}[3]
{
    \left(
      \begin{array}{cc}
        #1_{11} & #1_{12} \\
        #1_{21} & #1_{22}
      \end{array}
    \right)_{[#2,#3]}
}
\newcounter{margcomcount}
\ndef{\margcom}[1]{\marginpar{\bf \small #1} \addtocounter{margcomcount}{1}
   \index{******** \indexcom{{COMMENT}}}}
\newcounter{margproof}
\ndef{\margproof}{\marginpar{\bf \small Proof} \addtocounter{margproof}{1}
  \index{******** \indexcom{{\bf PROOF}}}}
\newcounter{margdetails}
\ndef{\margdetails}{\marginpar{\bf details} \addtocounter{margdetails}{1}
  \index{******** \indexcom{{\bf DETAILS}}}}
\newcounter{margproofb}
\ndef{\margproofb}{\marginpar{\bf \small Proof (B)} \addtocounter{margproofb}{1}
  \index{******** \indexcom{{PROOF (B)}}}}
\newcounter{margdetailsb}
\ndef{\margdetailsb}{\marginpar{\bf \small Details (B)} \addtocounter{margdetailsb}{1}
  \index{******** \indexcom{{DETAILS (B)}}}}
\ndef{\mytimes}{\!\times\!}
\ndef{\sss}[1]{\subsubsection{}\label{#1}}
\rndef{\phi}{\varphi}
\ndef{\OpenUnitDisk}{D}
\ndef{\RHS}{RHS}                            % right hand side
\ndef{\LHS}{LHS} %right and left hand side  % left hand side
\ndef{\ttt}{\Leftrightarrow}
\ndef{\then}{\Rightarrow}
\ndef{\tto}{\longrightarrow}
\ndef{\nno}{\nonumber\\}
\ndef{\newn}[1]{\index{#1} \emph{#1}}       % new notion
\ndef{\la}{\langle}
\ndef{\ra}{\rangle}
\ndef{\dbar}{{\;\bar{\phantom{o}} \!\!\!\! d}}
\ndef{\stl}[1]{\stackrel{\vbox to 0pt{\vss\hbox{$\scriptstyle #1$}}}}
\ndef{\mathcomment}[1]{{\scriptstyle\text{(#1)}}\qquad}        % for comments at the ends of lines with math formulas
\ndef{\details}[1]{\smallskip\begin{center} {\bf Here:} #1\end{center}\medskip}
\ndef{\indexcom}[1]{ --- #1}
\ndef{\longsim}{\ \sim \ }              % for use in formulas.
\ndef{\tire}{-}              % for use in formulas.
\ndef{\intinfinf}{\int_{-\infty}^\infty}
\ndef{\refnsftrace}{\cite[V.\,2.\,1]{TakI}} % reference to definition of n.f.s. trace %\cite[Definition V.2.1]{TakI}
\ndef{\refaffloper}{\cite[IV.\,5, Exercise 3]{TakI}} % reference to definition of affiliated operator
\ndef{\refsemifinvNa}{\cite[V.\,1.\,21]{TakI}} %  semifinite vNa
\ndef{\reftaumeasurable}{\cite[Definition 1.2]{FK86PJM}} % tau-measurable operator
\ndef{\reftautraceclassaffl}{\cite[V.2, p.\,320]{TakI}} % tau-trace class affiliated operator
\ndef{\refinvoperideal}{\cite[Appendix A.2]{CP2}} % invariant operator ideal
\ndef{\reftautracenorm}{\cite[V.2, p.\,320]{TakI}} % tau trace norm (1-norm)
\ndef{\reftaucompact}{\cite{}} % tau compact operator
\ndef{\reftauFredholm}{\cite[Appendix B]{PR94JFA}} % tau Fredholm operator
     \ndef{\npartial}{\slash\!\!\!\partial}
     \ndef{\Heis}{\operatorname{Heis}}
     \ndef{\Solv}{\operatorname{Solv}}
     \ndef{\Spin}{\operatorname{Spin}}
     \ndef{\SO}{\operatorname{SO}}
     \ndef{\Index}{\operatorname{index}}
             \ndef{\coker}{{\mbox coker}}
             \ndef{\p}{\partial}
             \ndef{\dd}{|\clD|}
             \ndef{\n}{\parallel}
     \ndef{\gf}[2]{\genfrac{}{}{0pt}{}{#1}{#2}}
     \ndef{\ta}{\widetilde{\alpha}}
     \ndef{\tb}{\widetilde{\beta}}
     \ndef{\txi}{\widetilde{\xi}}
     \ndef{\tk}{\widetilde{K}}
     \ndef{\CGh}{\widetilde{\CG}}
     \ndef{\boe}{{\bf e}}\ndef{\bt}{{\bf t}}
     \ndef{\vth}{\vartheta}
     \ndef{\db}{\overline{\partial}}
     \ndef{\hV}{\hat{V}}
     \ndef{\cag}{{\clA^\Gamma}}
     \ndef{\sind}{\sigma{\rm -ind}}
\let\LatexCite=\cite  % just renaming
\let\ifnumref\iftrue % use this command if you want NUMBER REFERENCES like [4]
\ndef{\ifuncited}[4]{\expandafter\ifx\csname used#4\endcsname\relax}
\ndef{\ifcited}[4]{\expandafter\ifx\csname used#4\endcsname\relax\else}
  \ndef{\papertitle}[1]{ \emph{#1}, }
  \ndef{\paperauthor}[2]{#2}  %{\ifnum#1=0$^*$\fi#2}
  \ndef{\pbbi}[9]{%
      \ifcited{#1}{#2}{#3}{#5}%
        \ifnumref%
          \bibitem{#5}\paperauthor{#1}{#6},\papertitle{#7}#8.%
        \else%
          \advance #9 by 1%
          \ifnum#9<1%
            \bibitem[#4]{#5}\paperauthor{#1}{#6}, \papertitle{#7}#8.%
          \else%
            \bibitem[#4$\!_{\the#9}\!$]{#5}\paperauthor{#1}{#6},\papertitle{#7}#8.%
          \fi%
        \fi%
      \fi%
  }
  \ndef{\mbbi}[8]{%
     \ifcited{#1}{#2}{#3}{#5}%
        \ifnumref%
          \bibitem{#5}\paperauthor{#1}{#6},\papertitle{#7}#8.%
        \else%
          \bibitem[#4]{#5}\paperauthor{#1}{#6},\papertitle{#7}#8.%
        \fi%
     \fi%
  }
\ndef{\AddCite}[1]{%
   \ifuncited{0}{0}{0}{#1}%
     \expandafter\gdef\csname used#1\endcsname {}%
   \fi%
}
\def\ProcessCite#1,{%
     \ifx\relax#1%
         \let\next=\relax%
     \else%
         \AddCite{#1}%
         \let\next=\ProcessCite%
     \fi%
     \next%
}
\ndef{\AddCites}[1]{\ProcessCite#1,\relax,}
\ndef{\CiteWithoutExtension}[1]{%
   \AddCites{#1}%
   \LatexCite{#1}%
}
\def\CiteWithExtension[#1]#2{%
   \AddCites{#2}%
   \LatexCite[#1]{#2}%
}
\ndef{\CleverCite}{%
    \ifx\NChar[ %
       \let\MyCite=\CiteWithExtension %
    \else %
       \let\MyCite=\CiteWithoutExtension %
    \fi %
    \MyCite%
}
\renewcommand{\cite}{\futurelet\NChar\CleverCite}
      \ndef{\volume}[1]{{\bf #1}}
      \ndef{\VolYearPP}[3]{\ifnum#2=0 (to appear)\else\volume{#1} (#2), #3\fi}
      \ndef{\VolNoYearPP}[4]{\ifnum#3=0 (to appear)\else\volume{#1} #2 (#3), #4\fi}
      \ndef{\libcode}[1]{}%{{,\bf\ #1}}
\ndef{\jnActaMath}[3]{Acta Math. \VolYearPP{#1}{#2}{#3}}                       % ActM        \libcode{510.5 A18}
\ndef{\jnAdvMath}[3]{Adv. in~Math. \VolYearPP{#1}{#2}{#3}}                     % AdvM        \libcode{510.5 A24}
\ndef{\jnAlgAnal}[3]{Algebra i~Analiz \VolYearPP{#1}{#2}{#3}}
\ndef{\jnAmerMathMonth}[3]{Amer. Math. Monthly \VolYearPP{#1}{#2}{#3}}         % AMM         \libcode{510.5 A3}
\ndef{\jnAnnMath}[4]{Ann. of~Math. \VolNoYearPP{#1}{#2}{#3}{#4}}               % AnnM        \libcode{510.5 A61}
\ndef{\jnAnalMath}[3]{J. Anal. Math. \VolYearPP{#1}{#2}{#3}}                   % JAnalM      \libcode{}
\ndef{\jnBullLondMathSoc}[3]{Bull. London Math. Soc. \VolYearPP{#1}{#2}{#3}}   % BLMS        \libcode{}
\ndef{\jnBullAMS}[3]{Bull. Amer. Math. Soc. \VolYearPP{#1}{#2}{#3}}   % BLMS        \libcode{}
\ndef{\jnCanMathBull}[3]{Canad. Math. Bull. \VolYearPP{#1}{#2}{#3}}            % CMB         \libcode{}
\ndef{\jnCanMath}[3]{Canad. J.~Math. \VolYearPP{#1}{#2}{#3}}             % CJM         \libcode{}
\ndef{\jnCommMathPhys}[3]{Comm. Math. Phys \VolYearPP{#1}{#2}{#3}}             % CMP         \libcode{530.1505 C73}
\ndef{\jnCommPDE}[3]{Comm. Partial Differential Equations \VolYearPP{#1}{#2}{#3}}             % CMP         \libcode{530.1505 C73}
\ndef{\jnComptRendue}[3]{C.\,R.~Acad. Sci. Paris S\'er. A-B \VolYearPP{#1}{#2}{#3}}      % CR   \libcode{505 A 161}
\ndef{\jnContMath}[3]{Contemporary Math. \VolYearPP{#1}{#2}{#3}}               %
\ndef{\jnDukeMJ}[3]{Duke Math. J. \VolYearPP{#1}{#2}{#3}}
\ndef{\jnDiffGeom}[3]{J.~Diff. Geom. \VolYearPP{#1}{#2}{#3}}                   % JDG         \libcode{516.3605 J86}
\ndef{\jnErgodicTheory}[3]{Ergodic Theory and Dynamical Systems \VolYearPP{#1}{#2}{#3}} % ETDS  \libcode{}
\ndef{\jnFuncAnal}[3]{J.~Functional Analysis \VolYearPP{#1}{#2}{#3}}           % JFA         \libcode{515.05 J88}
\ndef{\jnFunkAnalPril}[4]{Funct. Anal. Appl. \VolNoYearPP{#1}{#2}{#3}{#4}}  % JFAP
\ndef{\jnGAFA}[3]{GAFA \VolYearPP{#1}{#2}{#3}}                                 % GAFA        \libcode{}
\ndef{\jnIHES}[3]{IHES Publ. Math. (Paris) \VolYearPP{#1}{#2}{#3}}             % IHES        \libcode{}
\ndef{\jnIEOT}[3]{Integral Equations Operator Theory   \VolYearPP{#1}{#2}{#3}} %         \libcode{515.4505 I61}
\ndef{\jnIsrMath}[3]{Israel J.~Math. \VolYearPP{#1}{#2}{#3}}                   % IMJ         \libcode{}
\ndef{\jnKTheory}[3]{K-Theory \VolYearPP{#1}{#2}{#3}}                          % KT          \libcode{}
\ndef{\jnLetMathPhys}[3]{Lett. Math. Phys. \VolYearPP{#1}{#2}{#3}}             % LMP         \libcode{}
\ndef{\jnMathAnn}[3]{Math. Ann. \VolYearPP{#1}{#2}{#3}}                        % MAnn        \libcode{510.5 M51}
\ndef{\jnMathAnalAppl}[3]{J.~Math. Anal. and Appl. \VolYearPP{#1}{#2}{#3}}     % MathAnalAppl  \libcode{510.5 J88} only in repository
\ndef{\jnMathNachr}[3]{Math. Nachr. \VolYearPP{#1}{#2}{#3}}
\ndef{\jnMathPhys}[3]{J. Math. Phys. \VolYearPP{#1}{#2}{#3}}
\ndef{\jnMathSocJap}[3]{J. Math. Soc. Japan \VolYearPP{#1}{#2}{#3}}
\ndef{\jnOperTheory}[3]{J.~Operator Theory \VolYearPP{#1}{#2}{#3}}             % JOT         \libcode{}
\ndef{\jnPacJMath}[3]{Pacific J.~Math. \VolYearPP{#1}{#2}{#3}}                  % PJM         \libcode{510.5 P11}
\ndef{\jnPositivity}[3]{Positivity \VolYearPP{#1}{#2}{#3}}
\ndef{\jnProcAmerMS}[3]{Proc. Amer. Math. Soc. \VolYearPP{#1}{#2}{#3}}         % PAMS        \libcode{510.5 A5p}
\ndef{\jnProcCambPhilSoc}[3]{Math. Proc. Camb. Phil. Soc. \VolYearPP{#1}{#2}{#3}}
\ndef{\jnReineAngew}[3]{J.~Reine Angew. Math. \VolYearPP{#1}{#2}{#3}}          % JRAM        \libcode{510.5 J86}
\ndef{\jnTokyoMath}[3]{Tokyo J.~Math. \VolYearPP{#1}{#2}{#3}}
\ndef{\jnTopology}[3]{Topology \VolYearPP{#1}{#2}{#3}}
\ndef{\jnTransAmerMathSoc}[3]{Trans. Amer. Math. Soc. \VolYearPP{#1}{#2}{#3}}
\ndef{\jnIzvANSSSR}[3]{Izv. Akad. Nauk SSSR, Ser. Mat. \VolYearPP{#1}{#2}{#3}}
\ndef{\jnIzvVyshUchZav}[3]{Izv. Vyssh. Uch. Zav., Mat. \VolYearPP{#1}{#2}{#3} (Russian)}
\ndef{\jnIzdatLenUniv}[2]{Izdat. Leningrad. Univ., Leningrad, (#1), #2 (Russian)}
\ndef{\jnFieldsInsComm}[3]{Fields Inst. Comm. \VolYearPP{#1}{#2}{#3}}
\ndef{\jnDoklANSSSR}[3]{Dokl. Akad. Nauk SSSR \VolYearPP{#1}{#2}{#3}}
\ndef{\jnMatZametki}[3]{Matem. zametki \VolYearPP{#1}{#2}{#3}}
\ndef{\jnRussMathSurvey}[3]{Russian Math. Surveys \VolYearPP{#1}{#2}{#3}}
\ndef{\jnSibMathJ}[3]{Sib. Math.~J. \VolYearPP{#1}{#2}{#3}}
\ndef{\jnSovMath}[3]{J.~Soviet math. \VolYearPP{#1}{#2}{#3}}
\ndef{\jnTransMoscMathSoc}[3]{Trans. Moscow Math. Soc. \VolYearPP{#1}{#2}{#3}}
\ndef{\jnUMN}[3]{Uspekhi Mat. Nauk \VolYearPP{#1}{#2}{#3}}
\ndef{\bkTransMathMon}[2]{Trans. Math. Monographs, AMS, \volume{#1}, #2}
\ndef{\pbBirkhauser}[1]{Birkh\"auser, Boston, #1}
\ndef{\pbFactorial}[1]{Moscow, Factorial, #1}
\ndef{\pbGauthier}[1]{Gauthier-Villars, Paris, #1}
\ndef{\pbNauka}[1]{Moscow, Nauka, #1 (Russian)}
\ndef{\pbNaukaR}[1]{Москва, Наука, #1}
\ndef{\pbPrinceton}[1]{Princeton University Press, Princeton, New Jersey, #1}
\ndef{\pbPublPerish}[1]{Publish or Perish Inc., Berkeley, #1}
\ndef{\pbSpringer}[1]{Springer-Verlag, #1}
\ndef{\myauthor}[1]{\mbox{#1}}
\ndef{\Agmon}{\myauthor{Sh.\,Agmon}}
\ndef{\Ahiezer}{\myauthor{N.\,I.\,Ahiezer}}
\ndef{\Arazy}{\myauthor{J.\,Arazy}}
\ndef{\Astashkin}{\myauthor{S.\,V.\,Astashkin}}
\ndef{\Atiyah}{\myauthor{M.\,Atiyah}}
\ndef{\Avron}{\myauthor{J.\,E.\,Avron}}
\ndef{\Azamov}{\myauthor{N.\,A.\,Azamov}}
\ndef{\Banach}{\myauthor{S.\,Banach}}
\ndef{\Benameur}{\myauthor{M-T.\,Benameur}}
\ndef{\Bennett}{\myauthor{C.\,Bennett}}
\ndef{\Berezin}{\myauthor{F.\,A.\,Berezin}}
\ndef{\Berline}{\myauthor{N.\,Berline}}
\ndef{\Birman}{\myauthor{M.\,Sh.\,Birman}}
\ndef{\Blackadar}{\myauthor{B.\,Blackadar}}
\ndef{\Bogolyubov}{\myauthor{N.\,N.\,Bogolyubov}}
\ndef{\Bonsall}{\myauthor{F.\,F.\,Bonsall}}
\ndef{\Bony}{\myauthor{J.\,F.\,Bony}}
\ndef{\BoosBavnbek}{\myauthor{B.\,Boo$\beta$-Bavnbek}}
\ndef{\Bott}{\myauthor{R.\,Bott}}
\ndef{\Bratteli}{\myauthor{O.\,Bratteli}}
\ndef{\Bredon}{\myauthor{G.\,E.\,Bredon}}
\ndef{\Breuer}{\myauthor{M.\,Breuer}}
\ndef{\Brown}{\myauthor{L.\,G.\,Brown}}
\ndef{\Bruneau}{\myauthor{V.\,Bruneau}}
\ndef{\Buslaev}{\myauthor{V.\,S.\,Buslaev}}
\ndef{\Carey}{\myauthor{A.\,L.\,Carey}}
\ndef{\CareyRW}{\myauthor{R.\,W.\,Carey}} %Richard
\ndef{\Cartan}{\myauthor{H.\,Cartan}}
\ndef{\Chilin}{\myauthor{V.\,I.\,Chilin}}
\ndef{\Coburn}{\myauthor{L.\,A.\,Coburn}}
\ndef{\Connes}{\myauthor{A.\,Connes}}
\ndef{\Cornfeld}{\myauthor{I.\,P.\,Cornfeld}}
\ndef{\Daletskii}{\myauthor{Yu.\,L.\,Daletski\u\i}}   %Daletskii
\ndef{\Dixmier}{\myauthor{J.\,Dixmier}}
\ndef{\DoddsPG}{\myauthor{P.\,G.\,Dodds}}
\ndef{\DoddsTK}{\myauthor{T.\,K.\,Dodds}}
\ndef{\Douglas}{\myauthor{R.\,G.\,Douglas}}
\ndef{\Dubrovin}{\myauthor{B.\,A.\,Dubrovin}}
\ndef{\Dugundji}{\myauthor{J.\,Dugundji}}
\ndef{\Duncan}{\myauthor{J.\,Duncan}}
\ndef{\Dunford}{\myauthor{N.\,Dunford}}
\ndef{\Dykema}{\myauthor{K.\,J.\,Dykema}}
\ndef{\Edwards}{\myauthor{R.\,E.\,Edwards}}
\ndef{\Eilenberg}{\myauthor{S.\,Eilenberg}}
\ndef{\Entina}{\myauthor{S.\,B.\,\`Entina}}
\ndef{\Fack}{\myauthor{T.\,Fack}} %Thierry
\ndef{\Faddeev}{\myauthor{L.\,D.\,Faddeev}}
\ndef{\Farber}{\myauthor{M.\,Farber}}
\ndef{\Farforovskaya}{\myauthor{Yu.\,B.\,Farforovskaya}}
\ndef{\Federer}{\myauthor{H.\,Federer}}
\ndef{\Fedosov}{\myauthor{B.\,V.\,Fedosov}}
\ndef{\Figiel}{\myauthor{T.\,Figiel}} %Tadeush?
\ndef{\Figueroa}{\myauthor{H.\,Figueroa}}
\ndef{\Fillmore}{\myauthor{P.\,A.\,Fillmore}}
\ndef{\Fomenko}{\myauthor{A.\,T.\,Fomenko}} % Anatolii Trofimovich
\ndef{\Fomin}{\myauthor{S.\,V.\,Fomin}}
\ndef{\Frohlich}{\myauthor{J.\,Fr\"ohlich}}
\ndef{\Fuglede}{\myauthor{B.\,Fuglede}}
\ndef{\Furutani}{\myauthor{K.\,Furutani}}
\ndef{\Gelfand}{\myauthor{I.\,M.\,Gelfand}}
\ndef{\Gesztesy}{\myauthor{F.\,Gesztesy}}     %Fritz
\ndef{\Getzler}{\myauthor{E.\,Getzler}} % Ezra
\ndef{\Gilkey}{\myauthor{P.\,B.\,Gilkey}}
\ndef{\Gitler}{\myauthor{S.\,Gitler}}
\ndef{\Glazman}{\myauthor{I.\,M.\,Glazman}}
\ndef{\Glimm}{\myauthor{J.\,Glimm}}
\ndef{\Gohberg}{\myauthor{I.\,C.\,Gohberg}}
\ndef{\Goldshtein}{\myauthor{Ya.\,Goldshtein}}
\ndef{\Golze}{\myauthor{F.\,Golze}}
\ndef{\GraciaBondia}{\myauthor{J.\,M.\,Gracia-Bond\'{i}a}}
\ndef{\Greenleaf}{\myauthor{F.\,P.\,Greenleaf}}
\ndef{\Gromov}{\myauthor{M.\,Gromov}}
\ndef{\Gunning}{\myauthor{R.\,C.\,Gunning}}
\ndef{\Haagerup}{\myauthor{U.\,Haagerup}}
\ndef{\Haag}{\myauthor{R.\,Haag}}
\ndef{\Halmos}{\myauthor{Halmos}}
\ndef{\Hardy}{\myauthor{G.\,H.\,Hardy}}
\ndef{\Herbst}{\myauthor{I.\,W.\,Herbst}}
\ndef{\Higson}{\myauthor{N.\,Higson}}  % Nigel
\ndef{\Hoermander}{\myauthor{L.\,Hoermander}} % Lars
\ndef{\Hoffman}{\myauthor{K.\,Hoffman}} % Kenneth Hoffman
\ndef{\Ito}{\myauthor{K.\,Ito}}
\ndef{\Jaffe}{\myauthor{A.\,Jaffe}}
\ndef{\James}{\myauthor{I.\,M.\,James}}
\ndef{\Javrjan}{\myauthor{V.\,A.\,Javrjan}}
\ndef{\Kadison}{\myauthor{R.\,V.\,Kadison}}
\ndef{\Kalton}{\myauthor{N.\,J.\,Kalton}} % Nigel
\ndef{\Kato}{\myauthor{T.\,Kato}} % Tosio
\ndef{\Kobayashi}{\myauthor{S.\,Kobayashi}}
\ndef{\Koplienko}{\myauthor{L.\,S.\,Koplienko}}
\ndef{\Korotyaev}{\myauthor{E.\,Korotyaev}}
\ndef{\Kosaki}{\myauthor{H.\,Kosaki}}
\ndef{\Kostrykin}{\myauthor{Kostrykin}}
\ndef{\Kotani}{\myauthor{S.\,Kotani}}
\ndef{\Krein}{\myauthor{Kre\u\i n}}
\ndef{\KreinMG}{\myauthor{M.\,G.\,Kre\u\i n}}
\ndef{\KreinSG}{\myauthor{S.\,G.\,Kre\u\i n}}
\ndef{\Kuroda}{\myauthor{S.\,T.\,Kuroda}}
\ndef{\Leichtnam}{\myauthor{E.\,Leichtnam}}
\ndef{\Lesch}{\myauthor{M.\,Lesch}}
\ndef{\Lesniewski}{\myauthor{A.\,Lesniewski}}
\ndef{\Levitan}{\myauthor{B.\,M.\,Levitan}}
\ndef{\Lidskii}{\myauthor{V.\,B.\,Lidskii}}
\ndef{\Lifshits}{\myauthor{I.\,M.\,Lifshits}}
\ndef{\Lindenstrauss}{\myauthor{J.\,Lindenstrauss}}
\ndef{\Loday}{\myauthor{J.-L.\,Loday}}
\ndef{\Lord}{\myauthor{S.\,Lord}}      %Steven
\ndef{\Lorentz}{\myauthor{G.\,Lorentz}}
\ndef{\Magnus}{\myauthor{W.\,Magnus}}
\ndef{\Makarov}{\myauthor{K.\,A.\,Makarov}}
\ndef{\Mathai}{\myauthor{V.\,Mathai}}         %Varghese?
\ndef{\McKean}{\myauthor{H.\,P.\,McKean}}
\ndef{\Mishchenko}{\myauthor{A.\,S.\,Mishchenko}}
\ndef{\Molchanov}{\myauthor{S.\,A.\,Molchanov}}
\ndef{\Moore}{\myauthor{C.\,C.\,Moore}}
\ndef{\Moscovici}{\myauthor{H.\,Moscovici}}  %Henri?
\ndef{\Motovilov}{\myauthor{A.\,K.\,Motovilov}}
\ndef{\Moyer}{\myauthor{R.\,D.\,Moyer}}
\ndef{\Naboko}{\myauthor{S.\,N.\,Naboko}}
\ndef{\Narasimhan}{\myauthor{R.\,Narasimhan}}
\ndef{\Nomizu}{\myauthor{K.\,Nomizu}}
\ndef{\Novikov}{\myauthor{S.\,P.\,Novikov}}
\ndef{\Osterwalder}{\myauthor{K.\,Osterwalder}}
\ndef{\Patodi}{\myauthor{V.\,Patodi}}
\ndef{\Pagter}{\myauthor{B.\,de~Pagter}}  %Ben
\ndef{\Pastur}{\myauthor{L.\,A.\,Pastur}}  %Ben
\ndef{\Pavlov}{\myauthor{B.\,S.\,Pavlov}}
\ndef{\Pedersen}{\myauthor{G.\,K.\,Pedersen}}
\ndef{\Peller}{\myauthor{V.\,V.\,Peller}}
\ndef{\Perera}{\myauthor{V.\,S.\,Perera}}
\ndef{\Petunin}{\myauthor{Ju.\,I.\,Petunin}}
\ndef{\Phillips}{\myauthor{J.\,Phillips}}  %John
\ndef{\Piazza}{\myauthor{P.\,Piazza}}   %Paolo
\ndef{\Pincus}{\myauthor{J.\,D.\,Pincus}}   %Joel
\ndef{\Poincare}{Poincar\'e}
\ndef{\Postnikov}{\myauthor{M.\,M.\,Postnikov}} % Mikhail
\ndef{\Prinzis}{\myauthor{R.\,Prinzis}}
\ndef{\Privalov}{\myauthor{I.\,I.\,Privalov}}
\ndef{\Pushnitski}{\myauthor{A.\,B.\,Pushnitski}} % Alexander
\ndef{\Raeburn}{\myauthor{I.\,Raeburn}}
\ndef{\Raikov}{\myauthor{G.\,Raikov}}
\ndef{\Reed}{\myauthor{M.\,Reed}}
\ndef{\Rennie}{\myauthor{A.\,Rennie}}
\ndef{\Rickart}{\myauthor{C.\,E.\,Rickart}}
\ndef{\Riesz}{\myauthor{F.\,Riesz}}
\ndef{\Ringrose}{\myauthor{J.\,Ringrose}}
\ndef{\Robinson}{\myauthor{D.\,Robinson}}
\ndef{\Rossi}{\myauthor{H.\,Rossi}}
\ndef{\Rudin}{\myauthor{W.\,Rudin}}
\ndef{\Ruelle}{\myauthor{D.\,Ruelle}}
\ndef{\Ruzhansky}{\myauthor{M.\,Ruzhansky}}
\ndef{\Sakai}{\myauthor{Sh.\,Sakai}}
\ndef{\Sargsjan}{\myauthor{I.\,S.\,Sargsjan}}
\ndef{\Sato}{\myauthor{H.\,Sato}}
\ndef{\Schaeffer}{\myauthor{D.\,G.\,Schaeffer}}
\ndef{\Schluchtermann}{\myauthor{G.\,Schluchtermann}}
\ndef{\Schochet}{\myauthor{C.\,Schochet}}
\ndef{\Schrodinger}{\myauthor{E.\,Schr\"odinger}}
\ndef{\Schroodinger}{\myauthor{Schr\"odinger}}
\ndef{\Schrohe}{\myauthor{E.\,Schrohe}}
\ndef{\Schwartz}{\myauthor{J.\,T.\,Schwartz}}
\ndef{\Sedaev}{\myauthor{A.\,A.\,Sedaev}}
\ndef{\Seiler}{\myauthor{R.\,Seiler}}
\ndef{\Semenov}{\myauthor{E.\,M.\,Semenov}}
\ndef{\Shabat}{\myauthor{B.\,V.\,Shabat}}
\ndef{\Shafarevich}{\myauthor{I.\,R.\,Shafarevich}}
\ndef{\Sharpley}{\myauthor{R.\,Sharpley}}
\ndef{\Shilov}{\myauthor{G.\,E.\,Shilov}}
\ndef{\Shirkov}{\myauthor{D.\,V.\,Shirkov}}
\ndef{\Shubin}{\myauthor{M.\,A.\,Shubin}}
\ndef{\Silverman}{\myauthor{H.\,Silverman}}
\ndef{\Simon}{\myauthor{B.\,Simon}}
\ndef{\Sinai}{\myauthor{Ya.\,G.\,Sinai}}
\ndef{\Singer}{\myauthor{I.\,M.\,Singer}}
\ndef{\Solomyak}{\myauthor{M.\,Z.\,Solomyak}}
\ndef{\Soloviev}{\myauthor{Yu.\,P.\,Soloviev}}
\ndef{\Spivak}{\myauthor{M.\,Spivak}}
\ndef{\Stenkin}{\myauthor{V.\,V.\,Sten'kin}}
\ndef{\Stratila}{\myauthor{S.\,Stratila}}
\ndef{\Sucheston}{\myauthor{L.\,Sucheston}}
\ndef{\Sukochev}{\myauthor{F.\,A.\,Sukochev}}
\ndef{\Switzer}{\myauthor{R.\,M.\,Switzer}}
\ndef{\SzNagy}{\myauthor{B.\,Sz.-Nagy}}
\ndef{\Takesaki}{\myauthor{M.\,Takesaki}}
\ndef{\Taylor}{\myauthor{M.\,E.\,Taylor}}
\ndef{\Treves}{\myauthor{F.\,Treves}}
\ndef{\Troitsky}{\myauthor{E.\,V.\,Troitsky}}
\ndef{\Tzafriri}{\myauthor{L.\,Tzafriri}}
\ndef{\Varilly}{\myauthor{J.\,C.\,V\'{a}rilly}}
\ndef{\Vergne}{\myauthor{M.\,Vergne}}
\ndef{\Vladimirov}{\myauthor{V.\,S.\,Vladimirov}}
\ndef{\Voiculescu}{\myauthor{D.\,Voiculescu}}
\ndef{\Weiss}{\myauthor{G.\,Weiss}}
\ndef{\Wells}{\myauthor{R.\,O.\,Wells}}
\ndef{\Williams}{\myauthor{J.\,P.\,Williams}}
\ndef{\Winkler}{\myauthor{S.\,Winkler}}
\ndef{\Witten}{\myauthor{E.\,Witten}}
\ndef{\Wodzicki}{\myauthor{M.\,Wodzicki}}
\ndef{\Wojciechowski}{\myauthor{K.\,P.\,Wojciechowski}}
\ndef{\Yafaev}{\myauthor{D.\,R.\,Yafaev}}
\ndef{\Yosida}{\myauthor{K.\,Yosida}}
\ndef{\Zsido}{\myauthor{L.\,Zsido}}
\newcommand{\Texp}{\mathrm{T}\!\exp}
\newcommand{\xia}{\xi^{(a)}}
\newcommand{\xis}{\xi^{(s)}}
\newcommand{\mua}{\mu^{(a)}}
\newcommand{\mus}{\mu^{(s)}}
\newcommand{\LambdaA}{{\Lambda_\clA}}
\newcommand{\dm}{\nu} % dimension
\rndef{\LpH}[1]{\frS_{#1}\hilbasargument}       % the set of ...
\begin{document}
\title[Pushnitski's $\mu$\tire invariant]{Pushnitski's $\mu$\tire invariant and Schr\"odinger operators with embedded eigenvalues}
\author{\Azamov}
\address{School of Informatics and Engineering
   \\ Flinders University of South Australia
   \\ Bedford Park, 5042, SA Australia.}
\email{azam0001@infoeng.flinders.edu.au}
\keywords{Spectral shift function, scattering matrix, infinitesimal spectral flow,
    trace compatible operators, Birman-\Krein\ formula
}
\subjclass[2000]{ %Mathematics Subject Classification (2000).
    Primary 47A55; % Perturbation theory
    Secondary 47A11 % Local spectral properties
}
\begin{abstract} In this note, under a certain assumption on an affine space of operators,
which admit embedded eigenvalues, it is shown that
the singular part of the spectral shift function of any pair of operators from this space
is an integer-valued function.
The proof uses a natural decomposition of Pushnitski's $\mu$-invariant into "absolutely continuous"
and "singular" parts. As a corollary, the Birman-Krein formula follows.
\end{abstract}
\maketitle
\begin{center} {\small \sf\today} \end{center}

\section*{Introduction}
This work is a continuation of my previous note \cite{Az}.
For notation used here one should consult \cite{Az}.

Pushnitski's $\mu$\tire invariant $\mu(\theta,\lambda)$ \cite{Pu01FA} can be defined as spectral flow
of eigenvalues of the scattering matrix $S(\lambda+iy;H_1,H_0)$
(defined appropriately for complex values $\lambda+iy$) through the point $e^{i\theta},$
as $y$ goes from $0+$ to $\infty.$
There is a connection between SSF and Pushnitski's invariant \cite[(1.12)]{Pu01FA}
\begin{gather} \label{F: Push formula}
  \xi(\lambda) = - \frac 1{2\pi } \int_0^{2\pi} \mu(\theta, \lambda)\,d\theta.
\end{gather}
It turns out that the path $r \in [0,1] \mapsto S(\lambda;H_r,H_0)$ is continuous, so that one can
introduce the "absolutely continuous part" $\mua(\theta,\lambda)$ of Pushnitski's invariant
as spectral flow of eigenvalues of $S(\lambda;H_r,H_0)$ through $e^{i\theta}.$ Under certain assumptions,
which are natural for stationary approach to the mathematical scattering theory, and under which operators
admit embedded eigenvalues, it is shown that
\begin{gather*}
  \xia(\lambda) = - \frac 1{2\pi } \int_0^{2\pi} \mua(\theta, \lambda)\,d\theta,
\end{gather*}
and that the difference $\mus(\theta, \lambda) := \mu(\theta, \lambda) - \mua(\theta, \lambda)$
does not depend on $\theta$ and is equal to $-\xis.$ This implies that $\xis$ takes integer values.
This result can be interpreted as a jump by an integer multiple of $2\pi$
of one of the scattering phases $\theta_j(\lambda,r),$
when $r$ crosses the "resonance" point $r_0,$ i.e. the point, for which the equation $H_r \psi = \lambda \psi, \ \lambda \in \Lambda,$
has an $L^2$ solution, in accordance with Pushnitski's formula (\ref{F: Push formula}).
This also agrees with a physical fact that one of the scattering phases $\delta_l(E)$ ($l$ is the angular quantum number)
makes a smooth jump by $\pi,$ when the energy of the incident particle crosses the resonance value $E_0$
\cite[XVIII.6]{Bohm}, where the jump is smooth since the lifetime of the resonance is finite.
It seems to be likely that $\xis$ is always an integer-valued function.

Results of this note, as well as many proofs, were inspired by the ideas of \cite{Pu01FA}.

\section{Results}
%In this section we consider the case, when operators from $\clA$ can have embedded eigenvalues.
%For this we need stronger assumption.
%In this section we use heavily the ideas and results of \Pushnitski's paper \cite{Pu01FA}.
%especially the functions $M(z)$ and $S(z),$ and the spectral $\mu$\tire invariant.

%Sometimes we will identify the eigenvalues $e^{i\theta_j}$ of a unitary operator with their arguments $\theta_j.$
We will repeatedly use without mentioning the continuous dependence of isolated eigenvalues \cite[IV.3.5]{Kato}.

%First we establish some properties of the function
If $H_0$ is a self-adjoint operator and if $V$ is a bounded self-adjoint operator, then
let \cite[(4.1)]{Pu01FA}
$$
  M(z,r) := M(z; H_r, H_0) = \brs{H_r - \bar z}\brs{H_r - z}^{-1} \brs{H_0 - z}\brs{H_0 - \bar z}^{-1},
$$
where $z \in \mbC\setminus \mbR,$ and $H_r = H_0 + rV, \ r \in \mbR.$
Let for some $p \in [1,\infty]$
\begin{equation} \label{F: M(z,1) in Lp}
  M(z,1) - 1 \in \LpH{p}.
\end{equation}
% We also assume that $\forall \ H_0 \in \clA$ and $\forall \ V \in \clA_0$ \margcom{Do we need it?}
% $$
%   G (1 + \abs{H_0})^{-1/2} \in \LpH{\infty}.
% $$
\begin{lemma} If (\ref{F: M(z,1) in Lp}) holds, then for all $r \in [0,1]$ $M(z,r) - 1 \in \LpH{p}.$
\end{lemma}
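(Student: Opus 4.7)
The plan is to reduce the condition $M(z,r)-1\in\LpH{p}$ to an explicit formula involving a resolvent sandwich, and then to transfer membership at $r=1$ to arbitrary $r$ by a resolvent-identity argument.

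First I would compute $M(z,r)-1$ in closed form. Writing $R_z(r):=(H_r-z)^{-1}$ and using the two elementary identities $(H_r-\bar z)R_z(r) = 1 + (z-\bar z)R_z(r)$ and $R_z(r)(H_0-z)=R_z(r)(H_r-z-rV)=1-rR_z(r)V$, together with $(H_0-z)R_{\bar z}(0)=1-(z-\bar z)R_{\bar z}(0)$, a short direct computation yields
\begin{equation*}
   M(z,r)-1 \;=\; -\,r(z-\bar z)\,R_z(r)\,V\,R_{\bar z}(0).
\end{equation*}
This is the key formula; one sanity-check is that it vanishes at $r=0$. In particular, the hypothesis (\ref{F: M(z,1) in Lp}) is equivalent to $R_z(1)VR_{\bar z}(0)\in\LpH{p}$, and the conclusion of the lemma is equivalent to $R_z(r)VR_{\bar z}(0)\in\LpH{p}$ for each $r\in[0,1]$.

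Next I would apply the second resolvent identity in the form
\begin{equation*}
   R_z(r) \;=\; R_z(1) + (1-r)\,R_z(r)\,V\,R_z(1),
\end{equation*}
which follows from $R_z(r)-R_z(1)=R_z(r)(H_1-H_r)R_z(1)=(1-r)R_z(r)VR_z(1)$. Multiplying on the right by $VR_{\bar z}(0)$ gives
\begin{equation*}
   R_z(r)VR_{\bar z}(0) \;=\; R_z(1)VR_{\bar z}(0) + (1-r)\,R_z(r)V\cdot \big(R_z(1)VR_{\bar z}(0)\big).
\end{equation*}
The first term is in $\LpH{p}$ by hypothesis, and the second term is a bounded operator $R_z(r)V$ times an element of $\LpH{p}$; since $\LpH{p}$ is a two-sided ideal, it too lies in $\LpH{p}$. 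Substituting back into the closed form for $M(z,r)-1$ finishes the proof.

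There is no real obstacle: the content is the algebraic identity for $M(z,r)-1$, after which the ideal property of $\LpH{p}$ does the work. The only point requiring care is the order of factors when pushing the resolvent identity through, so that one isolates $R_z(1)VR_{\bar z}(0)$ as the element of $\LpH{p}$ and leaves the remaining factors bounded.
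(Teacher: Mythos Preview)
Your proof is correct and follows essentially the same route as the paper: both reduce the question to a resolvent-difference statement and then use the second resolvent identity together with the ideal property of $\LpH{p}$ to pass from $r=1$ to general $r$. The only cosmetic difference is that the paper invokes the equivalence $M(z,r)-1\in\LpH{p}\Leftrightarrow R_z(r)-R_z(0)\in\LpH{p}$ by citing \cite[\S4.1]{Pu01FA}, whereas you derive the explicit identity $M(z,r)-1=-r(z-\bar z)R_z(r)VR_{\bar z}(0)$ directly; after that, both arguments factor through the same resolvent manipulation.
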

\begin{proof} Since the inclusion $M(z,r) - 1 \in \LpH{p}$ is equivalent to
$(H_r - z)^{-1} - (H_0 - z)^{-1} \in \LpH{p}$ \cite[\S 4.1]{Pu01FA}, it is enough to prove this last inclusion.
Using the second resolvent identity, one has
\begin{equation*} \label{F: M(z,r) in Lp}
 \begin{split}
  (H_r - & z)^{-1} - (H_0 - z)^{-1} = - (H_r - z)^{-1} rV (H_0 - z)^{-1}\\
   & = - (H_r - z)^{-1}(H_1 - z)(H_1 - z)^{-1} rV (H_0 - z)^{-1} \\
   & = r(1+(1-r)V(H_1 - z)^{-1}) ((H_1 - z)^{-1} - (H_0 - z)^{-1}) \in \LpH{p},
 \end{split}
\end{equation*}
where the last operator belongs to $\LpH{p},$ since $(H_1 - z)^{-1} - (H_0 - z)^{-1} \in \LpH{p}$ by~(\ref{F: M(z,1) in Lp}).
\end{proof}

%Let $p \in [1,\infty].$
\begin{lemma} \label{L: Tr(V Rz(Hr)) is continuous}
If $VR_{z_0}(H_0) \in \LpH{p}$
for some $z_0 \in \mbC \setminus \mbR,$
then the function $(z,r) \in \mbC \setminus \mbR \times \mbR \mapsto VR_z(H_r)$
takes values in $\LpH{p}$ and $\LpH{p}$\tire continuous,
where $H_r = H_0 + rV.$
\end{lemma}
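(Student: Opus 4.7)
The plan is to proceed in three steps: first establish the $\LpH{p}$-inclusion for $VR_z(H_0)$, then lift it to $VR_z(H_r)$, and finally derive joint continuity from the resolvent identities together with a local uniform $\LpH{p}$-bound.

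First I would apply the first resolvent identity in commuted form,
\[
R_z(H_0) = R_{z_0}(H_0) + (z - z_0) R_{z_0}(H_0) R_z(H_0),
\]
and multiply on the left by $V$ to obtain
\[
VR_z(H_0) = VR_{z_0}(H_0) \bigl[ I + (z - z_0) R_z(H_0) \bigr].
\]
The bracketed factor is bounded and $\LpH{p}$ is a two-sided ideal in $\clB(\hilb)$, so $VR_z(H_0) \in \LpH{p}$ for every $z \in \mbC \setminus \mbR$, with the local uniform bound $\|VR_z(H_0)\|_p \leq \|VR_{z_0}(H_0)\|_p \bigl(1 + |z - z_0|/|\Im z|\bigr)$ (using the standard self-adjoint estimate $\|R_z(H_0)\| \leq 1/|\Im z|$). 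To pass from $H_0$ to $H_r$, the second resolvent identity yields
\[
R_z(H_r) = R_z(H_0) - r R_z(H_0) V R_z(H_r),
\]
whence
\[
VR_z(H_r) = VR_z(H_0) - r \brs{VR_z(H_0)} \brs{VR_z(H_r)}.
\]
The first summand lies in $\LpH{p}$ by the previous step, and the second is a product of an $\LpH{p}$-operator with a bounded one; hence $VR_z(H_r) \in \LpH{p}$, with the local uniform bound $\|VR_z(H_r)\|_p \leq \|VR_z(H_0)\|_p \bigl(1 + |r|\,\|V\|/|\Im z|\bigr)$.

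For joint continuity at $(z', r')$, I would combine both resolvent identities into
\begin{multline*}
VR_z(H_r) - VR_{z'}(H_{r'}) = -(r - r') \brs{VR_z(H_r)}\brs{VR_z(H_{r'})} \\ {}+ (z - z') \brs{VR_z(H_{r'})} R_{z'}(H_{r'}),
\end{multline*}
and estimate each term in $\LpH{p}$-norm by distributing the norm onto one $\LpH{p}$-factor and one operator-norm factor. Both $\LpH{p}$-factors are locally uniformly bounded by the estimates above, the remaining operator-norm factors are locally uniformly bounded on compact subsets of $(\mbC \setminus \mbR) \times \mbR$, and the scalar prefactors $(r - r')$ and $(z - z')$ tend to zero. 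The only real obstacle is the extraction of the local uniform $\LpH{p}$-bound on $\|VR_z(H_r)\|_p$; once that is in hand, everything else is routine ideal-norm bookkeeping.
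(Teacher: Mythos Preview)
Your proof is correct and follows essentially the same approach as the paper: the paper combines the first and second resolvent identities into the single formula $VR_z(H_r) = VR_{z_0}(H_0)\bigl[1+(z-z_0)R_z(H_0)\bigr]\bigl(1-rVR_z(H_r)\bigr)$ and asserts that this gives both the $\LpH{p}$-inclusion and continuity, whereas you unpack the two identities separately and supply explicit local uniform bounds together with a difference formula for the continuity step. Your version is more detailed but the underlying mechanism is identical.
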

\begin{proof} The first and the second resolvent identities imply
$$
  VR_z(H_r) = V R_{z_0}(H_0)[1+(z-z_0)R_{z}(H_0)](1-rVR_z(H_r)) \in \LpH{p}.
$$
This formula also implies continuity of $VR_z(H_r).$
\end{proof}

We note that if $H$ is semibounded then $VR_z(H) \in \LpH{p}$ implies that $G(1+\abs{H})^{1/2} \in \LpH{2p}.$
Plainly, the assumption $VR_z(H) \in \LpH{1}$ is stronger than the trace
compatibility assumption $V\phi(H) \in \LpH{1}, \ \phi \in C_c(\mbR).$

From now on we will assume that for any $H \in \clA$ and $V \in \clA_0$
\begin{equation} \label{F: VR in trace class}
  VR_z(H) \in \LpH{p} \ \ \text{and} \ \ G(1+\abs{H})^{1/2} \in \LpH{2p}.
\end{equation}

\begin{lemma} \label{L: M(z) is continuous}
The function
$
  (z,r) \in \mbC_+ \times [0,1] \mapsto M(z,r) \in 1+\LpH{p}
$
is continuous in $\LpH{p}$\tire norm.
\end{lemma}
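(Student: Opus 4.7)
The plan is to reduce the continuity of $M(z,r)-1$ to that of a single three-factor product, via the clean algebraic identity
$$M(z,r) - 1 = -2iyr\,R_z(H_r)\,V\,R_{\bar z}(H_0), \qquad z = x+iy \in \mbC_+,$$
after which the result follows by a routine triangle-inequality argument using Lemma \ref{L: Tr(V Rz(Hr)) is continuous} and the ideal property of $\LpH{p}$.

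To derive the identity I would start from
$$M(z,r) - 1 = \sqbrs{(H_r - \bar z)(H_r - z)^{-1}(H_0 - z) - (H_0 - \bar z)}(H_0 - \bar z)^{-1},$$
substitute $(H_0-z) = (H_r - z) - rV$ into the inner factor so that the $(H_r - z)^{-1}(H_r - z)$ cancels, and then use $(H_r - \bar z) - (H_r - z) = z - \bar z = 2iy$ to collect the remainder. Everything collapses to the stated one-line formula, with the factor $V$ surviving on the inside; this is exactly what lets us bring Lemma \ref{L: Tr(V Rz(Hr)) is continuous} to bear.

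With the identity secured, fix $(z_0,r_0) \in \mbC_+ \times [0,1]$ and decompose
$$R_z(H_r) V R_{\bar z}(H_0) - R_{z_0}(H_{r_0}) V R_{\bar z_0}(H_0) = [R_z(H_r) - R_{z_0}(H_{r_0})] V R_{\bar z}(H_0) + R_{z_0}(H_{r_0}) V [R_{\bar z}(H_0) - R_{\bar z_0}(H_0)].$$
The first summand is small in $\LpH{p}$ because $VR_{\bar z}(H_0)$ is locally $\LpH{p}$-bounded (Lemma \ref{L: Tr(V Rz(Hr)) is continuous}) while $(z,r) \mapsto R_z(H_r)$ is operator-norm continuous by the standard first and second resolvent identities; the second summand is small because $R_{z_0}(H_{r_0})$ is bounded and $z \mapsto V R_{\bar z}(H_0)$ is $\LpH{p}$-continuous, again by Lemma \ref{L: Tr(V Rz(Hr)) is continuous}. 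Combined with the obvious continuity of the scalar prefactor $-2iyr$, this yields the claim.

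The only genuine obstacle is spotting the identity in the first place; the rest is a mechanical application of the ideal property of $\LpH{p}$. Should the identity not present itself, one may instead begin with the expansion $M(z,r) - 1 = 2iy[R_z(H_r) - R_{\bar z}(H_0)] + 4y^2 R_z(H_r) R_{\bar z}(H_0)$ and introduce a factor of $V$ into each term by means of $R_z(H_r) - R_z(H_0) = -r R_z(H_r) V R_z(H_0)$; this reduces the problem to the same application of Lemma \ref{L: Tr(V Rz(Hr)) is continuous}.
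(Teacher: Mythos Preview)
Your proof is correct and takes a genuinely different route from the paper. The paper quotes Pushnitski's factorisation \cite[(4.22)]{Pu01FA},
\[
  M(z,r)-1 = r(\bar z - z)\,(G(H_0-\bar z)^{-1})^*\,(J^{-1}+rT(z))^{-1}\,(G(H_0-z)^{-1})\,\bigl(1-(z-\bar z)(H_0-\bar z)^{-1}\bigr),
\]
and then invokes the argument of \cite[Proposition 4.1(iii)]{Pu01FA}; continuity comes from the $\LpH{2p}$\tire continuity of the outer $G$-factors together with the norm continuity of $(J^{-1}+rT(z))^{-1}$. Your identity $M(z,r)-1 = -2iyr\,R_z(H_r)\,V\,R_{\bar z}(H_0)$ bypasses all of this: it needs no external citation, it feeds directly into the already-established Lemma~\ref{L: Tr(V Rz(Hr)) is continuous}, and it uses only the first half of assumption~(\ref{F: VR in trace class}) (the inclusion $VR_z(H)\in\LpH{p}$) rather than the $\LpH{2p}$ hypothesis on $G(1+|H|)^{-1/2}$. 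The paper's formula, on the other hand, keeps the dependence on $r$ confined to the single bounded factor $(J^{-1}+rT(z))^{-1}$, which is convenient elsewhere (e.g.\ in the proof of Lemma~\ref{L: norm of M to 0 uniformly} via $S(z,r)$). For the bare continuity statement, your argument is the cleaner one.
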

\begin{proof}
This follows from \cite[(4.22)]{Pu01FA}
\begin{multline*}
   M(z,r) - 1 = r(\bar z - z) (G(H_0-\bar z)^{-1})^* \\
   \times (J^{-1}+rT(z))^{-1}(G(H_0-z)^{-1})(1-(z - \bar z)(H_0-\bar z)^{-1})
\end{multline*}
and from the argument of the proof of \cite[Proposition 4.1 (iii)]{Pu01FA}.
%\margcom{This proof is OK}
\end{proof}
\begin{lemma} \cite[Lemma 4.1]{Pu01FA}
\label{L: norm of M to 0 uniformly} When $y \to +\infty$
$$
  \norm{M(\lambda+iy,H_r,H_0) - 1}_p \to 0
$$
uniformly with respect to $r \in [0,1].$
\end{lemma}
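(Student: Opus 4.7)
The plan is to start from the factorization identity used in the proof of Lemma~\ref{L: M(z) is continuous} (originally \cite[(4.22)]{Pu01FA}):
\begin{gather*}
   M(z,r) - 1 = r(\bar z - z) (G(H_0-\bar z)^{-1})^* (J^{-1}+rT(z))^{-1} \\
   \times (G(H_0-z)^{-1})\bigl(1-(z - \bar z)(H_0-\bar z)^{-1}\bigr),
\end{gather*}
and to estimate each of the five factors separately. With $z = \lambda+iy$, H\"older's inequality for Schatten ideals (placing the two $G(H_0-\cdot)^{-1}$ terms in $\LpH{2p}$ and the remaining factors in operator norm) reduces the task to showing that, uniformly in $r \in [0,1]$, each non-Schatten factor is bounded while the two Schatten factors are $O(1/y)$, so that their product against the prefactor $2ry$ is $O(1/y) \to 0$ as $y \to \infty$.

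The first step is the Schatten estimate. From \eqref{F: VR in trace class} we have $G(1+\abs{H_0})^{1/2} \in \LpH{2p}$, so the splitting
$$
   G(H_0-z)^{-1} = \bigl[G(1+\abs{H_0})^{1/2}\bigr]\cdot\bigl[(1+\abs{H_0})^{-1/2}(H_0-z)^{-1}\bigr]
$$
combined with the functional-calculus bound
$$
  \norm{(1+\abs{H_0})^{-1/2}(H_0-z)^{-1}} \leq \sup_{\mu\in\mbR}\frac{1}{\sqrt{1+\abs{\mu}}\sqrt{(\mu-\lambda)^2+y^2}} \leq \frac 1y
$$
yields $\norm{G(H_0-z)^{-1}}_{2p} = O(1/y)$, independently of $r$. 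The other scalar-like factor is immediate: $\norm{1-(z-\bar z)(H_0-\bar z)^{-1}} \leq 1 + 2y\cdot(1/y) = 3$.

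The main obstacle is the uniform boundedness of $(J^{-1}+rT(z))^{-1}$ in $r \in [0,1]$. Here $T(z) = G(H_0-z)^{-1}G^*$ is independent of $r$, and the same splitting shows $\norm{T(z)} \to 0$ as $y\to\infty$. Thus for all $y$ sufficiently large (depending on $\lambda$ but not on $r$), $J^{-1}+rT(z) = J^{-1}\bigl(1+rJT(z)\bigr)$ is invertible by a Neumann series that converges uniformly in $r$, with $\norm{(J^{-1}+rT(z))^{-1}} \leq 2\norm{J}$. Assembling the pieces gives $\norm{M(z,r)-1}_p = O(1/y)$ as $y\to\infty$, uniformly in $r \in [0,1]$, as required.
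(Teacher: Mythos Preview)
Your argument is correct. You estimate $M(z,r)-1$ directly from the factorization \cite[(4.22)]{Pu01FA}, using the standing assumption $G(1+|H_0|)^{1/2}\in\LpH{2p}$ to get $\norm{G(H_0-z)^{-1}}_{2p}=O(1/y)$ and a Neumann series to control $(J^{-1}+rT(z))^{-1}$ uniformly in $r$ once $y$ is large. The H\"older/ideal inequalities then give $\norm{M(z,r)-1}_p=O(1/y)$ uniformly in $r$.

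The paper takes a different route: it passes to the auxiliary operator $S(z,r)$ of (\ref{F: S(z,r)}), shows $\norm{S(z,r)-1}_p\to 0$ uniformly by repeating Pushnitski's own argument, and then transfers this to $M$ via \cite[Theorem 4.1]{Pu01FA}, which identifies the nontrivial eigenvalues of $M(z,r)$ with those of $S(z,r)$ (both being unitary, this identifies the Schatten norms of $M-1$ and $S-1$). Your approach is more self-contained: it avoids the detour through $S$ and the appeal to Theorem~4.1, at the cost of using the slightly stronger hypothesis on $G(1+|H_0|)^{1/2}$ rather than only the $\LpH p$ bound on $B_0(z)$ that drives the $S$-estimate. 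Since that hypothesis is part of the paper's standing assumption~(\ref{F: VR in trace class}), nothing is lost, and your argument in fact yields the explicit rate $O(1/y)$.
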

\begin{proof}
Consider the function \cite[\S 4.3]{Pu01FA}
\begin{equation} \label{F: S(z,r)}
  S(z,r) = 1 - 2i r B_0^{1/2}(z)J (1 + r T_0(z)J)^{-1} B_0^{1/2}(z),
\end{equation}
where $B_0(z) = \Im T_0(z).$
Repeating the argument of the proof of \cite[Lemma 4.1]{Pu01FA},
one can see that
$\norm{S(z,r) - 1}_p \to 0$ as $\Im z \to +\infty$
uniformly with respect to $r \in [0,1].$ Now, \cite[Theorem 4.1]{Pu01FA} completes the proof.
\end{proof}

% \begin{lemma} If $T_0(\lambda+i0) \in \LpH{\infty}$ and $T_r(\lambda+i0)$ exists, then
% $T_r(\lambda+i0) \in \LpH{\infty}.$
% \end{lemma}
% \begin{proof} This follows from \cite[Lemma 1.4]{Az}.
% \end{proof}

Let $\clA$ be a trace compatible space. Let $p, \tilde p \in [1,\infty],$ $\tilde p \leq p.$
\begin{assump}\label{A: assumption 3}
Let $H_0 \in \clA,$ $V \in \clA_0$ and let $H_r = H_0+rV,$ $r \in [0,1].$
\\ (i) the condition (\ref{F: VR in trace class}) holds;
%the function $T_0(z)$ takes values in $\LpH{p}$ for $\Im z \neq 0;$
\\ (ii) there exists an open set $\Lambda = \Lambda(H_0,V) \subset \LambdaA,$ such that
$\LambdaA \setminus \Lambda$ is a discrete in $\LambdaA$ set and
for all $\lambda \in \Lambda$ the function $T_0(\lambda+iy)$ has
non-tangential limit values $T_0(\lambda \pm i0)$ in $\LpH{\infty};$
\\ (iii) for all $\lambda \in \Lambda$ $B_0(\lambda \pm i0) := \Im T_0(\lambda \pm i0) \in \LpH{\tilde p};$
% \\ (iv) the set $e(H_0,V) \subset \Lambda,$
% where $Z_0(\lambda)$ does not exist, is discrete in $\Lambda_\clA;$
\\ (iv) if $J^{-1}+T_0(\lambda+i0)$ is not invertible for $\lambda \in \Lambda$
then $\lambda$ is an eigenvalue of $H_0+V.$
\end{assump}

Assumption (ii) implies that for $\lambda \in \Lambda(H_0,V),$
$T_r(\lambda+i0)$ exists outside a discrete set in $[0,1]$ and $T_r(\lambda+i0) \in \LpH{\infty}.$
Indeed, since $T_0(\lambda+i0) \in \LpH{\infty},$
the operator $1+rJT_0(\lambda+i0)$ is invertible on a set with discrete complement. Hence, \cite[Lemma 1.4]{Az} completes the proof.

Let
$$
  \gamma = \gamma(H_0,V) := \set{(\lambda,r) \in \LambdaA \times [0,1] \colon (\lambda,r) \notin \Lambda(H_0,rV)}.
$$
This set will be called a \emph{resonance set}.

We need the following variant of the stationary formula for the scattering matrix,
see e.g. \cite{BYa92AA2}.
\begin{thm} \label{T: stationary rep-n for SM: 2}
Let $r \in [0,1].$
If $H_0 \in \clA,$ $V \in \clA_0,$ $H_r = H_0 + rV,$ and if Assumption \ref{A: assumption 3}
holds, then for all $\lambda \in \Lambda(H_0,rV)$
the scattering matrix $S(\lambda; H_r,H_0)$ exists,
and the stationary representation for the scattering matrix %(\ref{F: stationary rep-n for SM})
\begin{gather} \label{F: stationary rep-n for SM: 2}
  S(\lambda; H_r,H_0) = 1_\lambda - 2\pi i r Z_0(\lambda) J (1 + r T_0(\lambda+i0)J)^{-1}Z_0^*(\lambda)
\end{gather}
holds.  Moreover, $S(\lambda; H_r,H_0) - 1_\lambda \in \frS_{\tilde p}(\hilb_\lambda)$
for all $\lambda \in \Lambda(H_0,rV).$
\end{thm}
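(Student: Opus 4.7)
The plan is to derive (\ref{F: stationary rep-n for SM: 2}) by reducing to the stationary Birman-Yafaev scheme already set up in \cite{Pu01FA}, then obtain the Schatten class membership from Assumption \ref{A: assumption 3}(iii). First, I would factor $V = G^* J G$ in the standard way and, using the second resolvent identity, verify the algebraic identity
$$
  T_r(z) = T_0(z)\brs{1 + rJT_0(z)}^{-1} = \brs{1 + rT_0(z)J}^{-1} T_0(z), \quad z \in \mbC_+.
$$
Assumption \ref{A: assumption 3}(ii) provides the boundary value $T_0(\lambda+i0) \in \LpH{\infty}$, and by the definition of $\Lambda(H_0,rV)$ together with \cite[Lemma 1.4]{Az}, $1 + rJT_0(\lambda+i0)$ is invertible for $\lambda \in \Lambda(H_0,rV)$, so the boundary formula for $T_r(\lambda+i0)$ holds in norm.

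Second, I would treat the scattering matrix itself. Existence of $S(\lambda; H_r, H_0)$ on $\hilb_\lambda$ follows from Birman's completeness theorem via (\ref{F: VR in trace class}) (applied iteratively if $p>1$ through the standard factorization argument). For the stationary representation, one starts from the complex-energy stationary ansatz (\ref{F: S(z,r)})
$$
   S(z,r) = 1 - 2ir B_0^{1/2}(z) J \brs{1 + rT_0(z) J}^{-1} B_0^{1/2}(z)
$$
and passes to the limit $z \to \lambda+i0$. The half-powers of $B_0$ on $\hilb$ are then intertwined with the evaluation map $Z_0(\lambda)\colon \hilb \to \hilb_\lambda$ via the Birman-Yafaev identity $\pi Z_0(\lambda) Z_0^*(\lambda) = B_0(\lambda+i0)$, converting the formula on $\hilb$ into the desired formula on $\hilb_\lambda$. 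The equivalence of this stationary expression with the time-dependent scattering matrix is the content of \cite[Theorem 4.1]{Pu01FA}, and I would invoke it at this stage.

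Third, the Schatten class claim is a direct H\"older estimate: Assumption \ref{A: assumption 3}(iii) gives $B_0(\lambda+i0) \in \LpH{\tilde p}$, so the factorization $\pi Z_0 Z_0^* = B_0$ forces $Z_0(\lambda), Z_0^*(\lambda) \in \LpH{2\tilde p}$. Since $J\brs{1 + rT_0(\lambda+i0) J}^{-1}$ is bounded by the first step, H\"older's inequality for Schatten ideals yields $S(\lambda; H_r, H_0) - 1_\lambda \in \frS_{\tilde p}(\hilb_\lambda)$.

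The main obstacle will be the limiting-absorption passage $y \to 0^+$ at boundary points: the set $\LambdaA \setminus \Lambda(H_0,rV)$ contains, by Assumption \ref{A: assumption 3}(iv), precisely the potentially embedded eigenvalues of $H_r$, so one must carefully exclude these before inverting $1+rJT_0(\lambda+i0)$. The requirement $\lambda \in \Lambda(H_0,rV)$ (rather than the a priori weaker $\lambda \in \Lambda(H_0,V)$) is what makes the inversion legal in $\LpH{\infty}$, and verifying that the inversion produces boundary values matching the time-dependent object on $\hilb_\lambda$ — rather than merely an auxiliary complex-energy operator — is the technical crux of the argument.
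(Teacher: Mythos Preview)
The paper does not actually prove this theorem: it is stated as a needed ``variant of the stationary formula for the scattering matrix'' with a reference to \cite{BYa92AA2}, and no argument is given. Your proposal is therefore not competing with a proof in the paper but rather sketching the standard Birman--Yafaev stationary argument that underlies that reference. In broad outline your plan is the right one and is what \cite{BYa92AA2} carries out.

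Two corrections are worth noting. First, the factorization identity is $\pi Z_0^*(\lambda)Z_0(\lambda) = B_0(\lambda+i0)$, not $\pi Z_0(\lambda)Z_0^*(\lambda) = B_0(\lambda+i0)$: since $Z_0(\lambda)\colon \hilb\to\hilb_\lambda$, the product $Z_0 Z_0^*$ acts on $\hilb_\lambda$ while $B_0(\lambda+i0)=\Im T_0(\lambda+i0)$ acts on $\hilb$, so your version is dimensionally inconsistent (the paper itself records the correct form in the proof of Lemma~\ref{L: theta+ - theta- is 2pi}). This also affects your H\"older step, though the conclusion survives once the order is fixed. Second, \cite[Theorem 4.1]{Pu01FA} is not the right citation for identifying the stationary expression with the time-dependent scattering matrix: that theorem asserts that the eigenvalues of $M(z;H_r,H_0)$ and of the auxiliary operator $S(z,r)$ coincide for $z\in\mbC\setminus\mbR$, which is a different statement. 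The identification you want is part of the stationary scattering formalism in \cite{BYa92AA2} (or \cite{Ya}).
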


\begin{lemma} \label{L: S(r) is meromorphic}
The function $r \mapsto S(\lambda; H_r, H_0)$ is a meromorphic function,
which has analytic continuation to all real poles of $(J^{-1}+rT_0(\lambda+i0))^{-1}.$
\end{lemma}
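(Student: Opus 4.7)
\emph{Proof proposal.} The plan is to combine two ingredients: the analytic Fredholm theorem, to produce the meromorphic extension, and unitarity of $S(\lambda;H_r,H_0)$ on the real axis, to promote meromorphy to analyticity at real poles.

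First I would establish meromorphy. By Assumption \ref{A: assumption 3}(ii), $T_0(\lambda+i0)\in\LpH{\infty}$, so $T_0(\lambda+i0)J$ is compact. The family $r \in \mbC \mapsto 1+rT_0(\lambda+i0)J$ is therefore an affine-linear (hence entire) family of operators of the form identity plus compact, invertible at $r=0$. The analytic Fredholm theorem then yields that $(1+rT_0(\lambda+i0)J)^{-1}$ is meromorphic on all of $\mbC$, with a discrete pole set and finite-rank principal parts. Since $J$ is bounded invertible, $(J^{-1}+rT_0(\lambda+i0))^{-1}$ and $(1+rT_0(\lambda+i0)J)^{-1}$ have the same poles, and these are exactly the ones appearing in the statement. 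Substituting this meromorphic inverse into the stationary formula (\ref{F: stationary rep-n for SM: 2}) exhibits $r \mapsto S(\lambda;H_r,H_0)$ as meromorphic in operator norm.

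Next I would upgrade meromorphy to analyticity at real poles. Let $r_0 \in \mbR$ be a pole of $(1+rT_0(\lambda+i0)J)^{-1}$. For real $r \neq r_0$ near $r_0$, Theorem \ref{T: stationary rep-n for SM: 2} produces the scattering matrix $S(\lambda;H_r,H_0)$ on $\hilb_\lambda$, which is unitary, so $\norm{S(\lambda;H_r,H_0)} = 1$. If the meromorphic extension had a pole of order $n \geq 1$ at $r_0$, its Laurent expansion in a punctured complex neighborhood of $r_0$ would read
$$
  S(\lambda;H_r,H_0) = \sum_{k \geq -n} S_k (r-r_0)^k, \qquad S_{-n} \neq 0,
$$
converging in operator norm. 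Then $(r-r_0)^n S(\lambda;H_r,H_0) \to S_{-n}$ in norm as $r \to r_0$ in $\mbC$, whereas along the real axis $\norm{(r-r_0)^n S(\lambda;H_r,H_0)} = |r-r_0|^n \to 0$. This contradiction forces the principal part to vanish, so $S(\lambda;H_r,H_0)$ extends analytically through $r_0$.

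The main obstacle I expect is the compactness of $T_0(\lambda+i0)J$ needed in the first step. If $\LpH{\infty}$ is read as $\frS_\infty$ in the compact-operator sense, this is immediate from Assumption \ref{A: assumption 3}(ii); otherwise I would extract compactness from Assumption \ref{A: assumption 3}(iii), using $B_0(\lambda+i0) = \Im T_0(\lambda+i0) \in \LpH{\tilde p}$ together with the symmetric factorization of $T_0$ through $B_0^{1/2}$ exploited in \cite[\S 4]{Pu01FA}. Once analytic Fredholm is available, the unitarity-plus-Laurent argument is essentially formal.
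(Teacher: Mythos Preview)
Your proposal is correct and follows essentially the same route as the paper: meromorphy from the analytic Fredholm alternative applied to the stationary formula~(\ref{F: stationary rep-n for SM: 2}), then removal of real poles via the uniform bound coming from unitarity of $S(\lambda;H_r,H_0)$ for real $r$. Your Laurent-expansion argument simply makes explicit the standard fact the paper invokes in one line (``bounded along real $r$ $\Rightarrow$ analytic continuation''), and your worry about compactness is unnecessary here since in the paper's notation $\LpH{\infty}=\frS_\infty(\hilb)$ denotes the compact operators.
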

\begin{proof}
Since $T_0$ is compact, the function $r \mapsto S(\lambda; H_r, H_0)$
is meromorphic by Theorem \ref{T: stationary rep-n for SM: 2}
and the analytic Fredholm alternative. Since it is also bounded (unitary-valued)
along $r \in \Lambda,$ it has analytic continuation to any real pole $r = r_0$
of $(J^{-1}+rT_0(\lambda))^{-1}.$
\end{proof}

% \cite[Lemma 1.4]{Az} shows
% that the set of $r$'s for which $T_r$ does not exist, is discrete.

It follows from \cite[(17)]{Az} %the first equality of the proof of \cite[Prop 2.4]{}
that the function $r \mapsto w_+(\lambda; H_0, H_r)\Pi_{H_r}(\dot H_r)(\lambda)w_+(\lambda; H_r, H_0)$
also has analytic continuation to~\mbox{$r = r_0.$}

%\margcom{These two \\ theorems need \\ a clarification}

Let $\Gamma$ be a piecewise linear path in $\clA.$ We denote by $\Lambda(\Gamma)$
the intersection of the sets $\Lambda(H_j,H_j-H_{j-1}), \ j=1,2,\ldots,$ where $H_0,H_1,H_2,\ldots$ are the
nodes of~$\Gamma.$

\begin{thm} \label{T: S = Texp for res.case}
Let Assumption \ref{A: assumption 3} holds
and let $H_0, H_1 \in \clA.$
If $\Gamma = \set{H_r}_{r \in [0,1]}$ is a piecewise linear path in $\clA,$
connecting $H_0$ and $H_1,$ then for any $\lambda \in \Lambda(\Gamma)$
%(\ref{F: S = T exp...})
 \begin{gather} \label{F: S = T exp... res.case}
   S(\lambda; H_1, H_0) = \Texp\brs{-2\pi i \int_0^1 w_+(\lambda; H_0, H_r)\Pi_{H_r}(\dot H_r)(\lambda)w_+(\lambda; H_r, H_0)\,dr}.
 \end{gather}
holds.
\end{thm}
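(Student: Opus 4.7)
The plan is to reduce the formula to the non-resonance case already treated in \cite{Az} and then extend by analytic continuation through the finitely many resonance parameters. Let $R \subset [0,1]$ denote the set of $r$ for which $(\lambda, r)$ is a resonance point relative to some linear segment of $\Gamma$, i.e., the set of $r$ where $(J^{-1}+rT_0(\lambda+i0))^{-1}$ has a pole on the corresponding sub-segment. By the definition of $\Lambda(\Gamma)$ combined with Lemma \ref{L: S(r) is meromorphic}, $R$ is discrete, hence finite, in $[0,1]$. For each $r \in [0,1] \setminus R$ the entire sub-path from $0$ to $r$ along $\Gamma$ avoids resonances, the stationary representation of Theorem \ref{T: stationary rep-n for SM: 2} applies uniformly in $s \in [0,r]$, and the non-resonance version of the theorem proved in \cite{Az} gives
\[
  S(\lambda; H_r, H_0) = \Texp\brs{-2\pi i \int_0^r w_+(\lambda; H_0, H_s)\Pi_{H_s}(\dot H_s)(\lambda)w_+(\lambda; H_s, H_0)\,ds}.
\]

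The next step is to show that both sides extend continuously in $r$ across the points of $R$. For the LHS this is Lemma \ref{L: S(r) is meromorphic}: on each linear segment of $\Gamma$ the map $r \mapsto S(\lambda; H_r, H_0)$ is meromorphic and admits analytic continuation through each of its real poles. For the RHS one invokes the remark immediately following Lemma \ref{L: S(r) is meromorphic}, which asserts that the integrand $A(s) := w_+(\lambda; H_0, H_s)\Pi_{H_s}(\dot H_s)(\lambda)w_+(\lambda; H_s, H_0)$ itself admits analytic continuation through each point of $R$. Consequently $A$ is piecewise analytic, and in particular bounded and continuous, on all of $[0,1]$; the Volterra iteration defining $\Texp$ then yields a continuous $\LpH{\tilde p}$-valued solution $U(r)$ of $U'(r) = -2\pi i A(r) U(r)$, $U(0) = \id$, on all of $[0,1]$.

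By the first step the identity holds on the dense subset $[0,1] \setminus R$, and by the second step both sides are continuous in $r$, so the identity persists on all of $[0,1]$; setting $r = 1$ yields (\ref{F: S = T exp... res.case}).

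The principal obstacle is the second step: one must verify that the T-exponential of the analytically continued integrand agrees with the analytic continuation of the T-exponential, which ultimately reduces to the standard parameter-analyticity of fundamental solutions of linear ODEs with analytic coefficients. What requires genuine work is uniform $\LpH{\tilde p}$-norm control of $A(s)$ in a neighbourhood of each resonance $r_0 \in R$, so that the Volterra series defining $\Texp$ converges there and the continuation across $r_0$ matches the one produced for the scattering matrix by Lemma \ref{L: S(r) is meromorphic}. This estimate is where the embedded-eigenvalue structure codified in Assumption \ref{A: assumption 3} enters in an essential way.
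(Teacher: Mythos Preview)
Your overall strategy---establish the formula in the non-resonance regime via \cite{Az}, then push through the finitely many resonance parameters using the analytic continuation provided by Lemma~\ref{L: S(r) is meromorphic} and the remark following it---is exactly the paper's approach. However, there is a logical slip in your first step that breaks the argument as written.

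You claim that for each $r \in [0,1] \setminus R$ ``the entire sub-path from $0$ to $r$ along $\Gamma$ avoids resonances.'' This is false: if $r_0 \in R$ and $r > r_0$ with $r \notin R$, the sub-path $[0,r]$ still passes through $r_0$. The non-resonance theorem from \cite{Az} therefore only gives you the identity on $[0,\min R)$, not on all of $[0,1]\setminus R$. Consequently your density-plus-continuity argument in the third step cannot conclude: continuity alone only extends the identity from $[0,\min R)$ to $[0,\min R]$, and you have no mechanism for getting past the first resonance.

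The repair is already implicit in what you wrote: both sides are not merely continuous but \emph{real-analytic} in $r$ on each linear segment (the left side by Lemma~\ref{L: S(r) is meromorphic}, the right side because the $\Texp$ of an analytic integrand is analytic, by the standard parameter-analyticity of linear ODE solutions that you yourself allude to). Analyticity, unlike continuity, does propagate the identity from $[0,\min R)$ across each resonance in turn. Alternatively, and perhaps more in the spirit of the proof of \cite[Proposition~2.4]{Az}, observe that both sides satisfy the same first-order linear ODE $U'(r) = -2\pi i A(r)\,U(r)$ with $U(0)=1$; since $A$ is continuous on $[0,1]$ after analytic continuation, uniqueness of solutions finishes the argument without any density step at all.
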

\begin{proof} %Using the multiplicativity property of chronological exponential
%and the discreteness of the set of $r$'s from the resonance set, one can assume
%that there is only one $r$ in the resonance set ($\lambda$ is fixed).
For the straight line $\set{H_r}_{r \in [0,1]}$
(\ref{F: S = T exp... res.case}) follows from Lemma \ref{L: S(r) is meromorphic}
and (the proof of) \cite[Proposition 2.4]{Az}. For a piecewise linear path $\set{H_r}_{r \in [0,1]}$
the proof is the same as that of Theorem %\ref{T: S = Texp for piecewise linear path}.
\cite[Theorem 2.5]{Az}.
\end{proof}

\begin{thm} \label{T: det S = exp(xi ac) for resonance case}
If $H_0,H_1 \in \clA$ and Assumption \ref{A: assumption 3} holds, then
\begin{gather*} \label{F: Birman Krein res.case}
  -2\pi i \xia_{H_1,H_0}(\lambda) = \ln \det S(\lambda; H_1, H_0), \quad a.\,e.\, \ \lambda \in \LambdaA,
\end{gather*}
where the branch of the logarithm is chosen in such a way, that the function
$r \in [0,1] \mapsto \ln \det S(\lambda; H_r,H_0)$ is continuous. Here $\set{H_r}$
is a piecewise linear path.
\end{thm}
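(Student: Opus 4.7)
The plan is to take the Fredholm determinant of both sides of the stationary identity (\ref{F: S = T exp... res.case}) from Theorem~\ref{T: S = Texp for res.case}. Under Assumption~\ref{A: assumption 3}(iii) and the fact that $\tilde p \leq p$, the operator-valued integrand $w_+(\lambda; H_0, H_r)\Pi_{H_r}(\dot H_r)(\lambda) w_+(\lambda; H_r, H_0)$ takes values in the trace class on the fiber Hilbert space $\hilb_\lambda$, so it makes sense to apply the identity
$$
  \det \Texp\brs{-2\pi i \int_0^1 A(r)\,dr} = \exp\brs{-2\pi i \int_0^1 \tr_{\hilb_\lambda} A(r)\,dr}
$$
for continuous trace-class-valued paths $A(r)$. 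Continuity is supplied by the continuous extension through resonance values $r = r_0$ noted after Lemma~\ref{L: S(r) is meromorphic}.

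After taking the determinant I would use the cyclic property of the trace together with the unitarity relation $w_+(\lambda; H_r, H_0)w_+(\lambda; H_0, H_r) = 1_\lambda$ on $\hilb_\lambda$ to cancel the intertwining wave operators in the integrand. This reduces the exponent to
$$
  -2\pi i \int_0^1 \tr_{\hilb_\lambda}\bigl[\Pi_{H_r}(\dot H_r)(\lambda)\bigr]\,dr.
$$
The next step is to identify this integral, for a.e.\ $\lambda \in \LambdaA$, with the absolutely continuous part $\xia_{H_1,H_0}(\lambda)$ of the spectral shift function. By the definition from \cite{Az}, $\tr_{\hilb_\lambda}\Pi_{H_r}(\dot H_r)(\lambda)$ is precisely the density of the infinitesimal SSF along the path, and its integral over $r \in [0,1]$ is the absolutely continuous SSF of the pair $(H_1,H_0)$, which is well defined a.e.\ on $\LambdaA$ (and the discrete set $\gamma$ of resonance values of $r$ does not affect the integral).

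Finally, taking the logarithm yields the desired identity; continuity of the chosen branch of $\ln\det S(\lambda; H_r, H_0)$ in $r$ is automatic because the right-hand side is presented as an explicit exponential of a continuous function of $r$. The main obstacles I expect are technical: (a) justifying the trace-class membership and the $\Texp$-to-$\det$ formula in the presence of isolated singularities of the path $r \mapsto S(\lambda; H_r, H_0)$, which is handled by removable-singularity arguments using Lemma~\ref{L: S(r) is meromorphic} and analytic continuation of $w_+ \Pi_{H_r}(\dot H_r) w_+$ through $r = r_0$; and (b) carefully matching the formula $\xia(\lambda) = \int_0^1 \tr_{\hilb_\lambda}\Pi_{H_r}(\dot H_r)(\lambda)\,dr$ to the definition from \cite{Az} on the full-measure set where Assumption~\ref{A: assumption 3} is effective.
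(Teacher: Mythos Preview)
Your proposal is correct and follows essentially the same approach as the paper, which simply states that the proof is the same as that of \cite[Theorem 2.7]{Az}: take determinants in the $\Texp$ identity of Theorem~\ref{T: S = Texp for res.case}, apply the $\det\Texp = \exp\int\tr$ formula (this is \cite[Lemma A.3]{Az}), and identify the resulting trace integral with $\xia$ via the definition of the absolutely continuous spectral shift from \cite{Az}. One small notational slip: after cyclicity you need $w_+(\lambda;H_r,H_0)w_+(\lambda;H_0,H_r)$ to be the identity on the $H_r$ fiber (not on $\hilb_\lambda$, which is the $H_0$ fiber), but this is exactly what completeness of the local wave operators gives.
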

The proof of this theorem is the same as that of Theorem \cite[Theorem 2.7]{Az}. %\ref{T: det S = exp(xi ac)}. %

\begin{cor} \label{C: def of xia is good for res case}
  The definition of $\xia$ and $\xis$ does not depend on the choice of the piecewise linear path $\Gamma.$
\end{cor}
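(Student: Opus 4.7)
It suffices to establish path-independence of $\xia$; path-independence of $\xis := \xi - \xia$ then follows at once since the total spectral shift function $\xi_{H_1,H_0}$ depends only on the endpoints. My strategy is a homotopy argument based on Theorem~\ref{T: det S = exp(xi ac) for resonance case}.

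Given two piecewise linear paths $\Gamma^{(0)} = \{H^{(0)}_r\}_{r\in[0,1]}$ and $\Gamma^{(1)} = \{H^{(1)}_r\}_{r\in[0,1]}$ in $\clA$ with common endpoints $H_0$ and $H_1$, I would connect them by the convex interpolation
\[
  H^{(s)}_r := (1-s)H^{(0)}_r + sH^{(1)}_r, \qquad (r,s)\in[0,1]^2.
\]
Since $\clA$ is affine, each $\Gamma^{(s)} = \{H^{(s)}_r\}$ is a piecewise linear path in $\clA$ from $H_0$ to $H_1$ (after a common refinement of breakpoints). I would then fix $\lambda$ in the complement of a null subset of $\LambdaA$ so that Theorem~\ref{T: det S = exp(xi ac) for resonance case} applies along every $\Gamma^{(s)}$.

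The key step is to extract from the stationary representation~(\ref{F: stationary rep-n for SM: 2}), together with Lemma~\ref{L: M(z) is continuous} and the analytic continuation across resonance values of $r$ guaranteed by Lemma~\ref{L: S(r) is meromorphic}, a jointly continuous map
\[
  (r,s) \in [0,1]^2 \longmapsto S(\lambda; H^{(s)}_r, H_0) - 1_\lambda \in \frS_{\tilde p}(\hilb_\lambda).
\]
Taking the Fredholm determinant yields a continuous function $f(r,s) := \det S(\lambda; H^{(s)}_r, H_0)$ on the simply connected square $[0,1]^2$. By the standard lifting theorem, $f = e^{L}$ for a continuous $L : [0,1]^2 \to \mbC$ with $L(0,s) \equiv 0$. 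At fixed $s$, the function $r \mapsto L(r,s)$ coincides with the continuous branch of $\ln\det S(\lambda; H^{(s)}_r, H_0)$ used in Theorem~\ref{T: det S = exp(xi ac) for resonance case}, so $L(1,s) = -2\pi i\,\xia_{H_1,H_0}(\lambda)$ when computed along $\Gamma^{(s)}$. Since $H^{(s)}_1 = H_1$ for every $s$, all the values $L(1,s)$ lie in the discrete coset $\ln \det S(\lambda;H_1,H_0) + 2\pi i\mbZ$; combined with continuity in $s$, this forces $L(1,s)$ to be constant, giving path-independence of $\xia_{H_1,H_0}(\lambda)$ for a.e. $\lambda$.

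The main obstacle I anticipate is the joint $(r,s)$-continuity across resonance points, where the stationary formula itself is singular. One has to verify that the analytic extensions of $(J^{-1}+rT_0(\lambda+i0))^{-1}$ at isolated real $r$-poles, supplied by the analytic Fredholm alternative, depend continuously on $s$; this should reduce to a local normal form expressing the inverse as the sum of a finite-rank meromorphic singular part and a holomorphic remainder, both varying continuously in $s$ via the continuity of the map $H \mapsto T(\lambda+i0)$ on the affine slice $\{H^{(s)}_r\}$.
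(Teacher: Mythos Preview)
Your approach is correct and supplies what the paper omits entirely: the corollary is stated without proof immediately after Theorem~\ref{T: det S = exp(xi ac) for resonance case}, the author evidently regarding it as a direct consequence. The implicit reasoning is that $\det S(\lambda;H_1,H_0)$ depends only on the endpoints, so two piecewise linear paths give values of $\xia$ differing at most by an integer, and convexity of the affine space $\clA$ kills that integer via precisely the homotopy-and-lift mechanism you write out. You are not taking a different route from the paper so much as making explicit the step it leaves tacit.

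The obstacle you flag --- joint $(r,s)$-continuity of the scattering matrix across the two-parameter resonance set --- is genuine, and your proposed resolution through the finite-rank Laurent decomposition coming from the analytic Fredholm alternative is the right idea. One point to watch: the stationary formula~(\ref{F: stationary rep-n for SM: 2}) is written for $H_r = H_0 + rV$ with a \emph{fixed} factorisation $V = G^*JG$, so when both $r$ and $s$ vary the operators $Z_0$, $J$, $T_0$ themselves move with the perturbation $V(r,s) = H^{(s)}_r - H_0$. You therefore need norm-continuity of $V\mapsto |V|^{1/2}$ on bounded self-adjoint sets (which holds by functional calculus) to get continuity of the stationary data away from resonances, before invoking the removable-singularity argument of Lemma~\ref{L: S(r) is meromorphic} --- boundedness of the unitary-valued $S$ --- to extend across them. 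With that in hand your lift $L$ on $[0,1]^2$ is well defined and the argument closes as you describe.
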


% \begin{lemma} If Assumption \ref{A: assumption 3} holds, then for each $\lambda \in \Lambda$
% the set, where $Z_r(\lambda)$ does not exist, is discrete in $[0,1].$
% \end{lemma}
% \begin{proof} This follows from the stationary formula for the scattering matrix. \margcom{???}
% \end{proof}

%(A) Assume that for any $\eta \in \hilb$ the limits $R_{\lambda\pm i0}(H)V\eta$ exist.

We denote by $e^{i\theta_j(z,r)}, \ j = 1,2,\ldots,$ the eigenvalues of the operator $M(z;H_r,H_0).$

\begin{lemma} \label{L: S(lambda+i0) exists}
If $(\lambda,r) \notin \gamma(H_0,V),$
then the limits values $e^{i\theta_j(\lambda+i0,r)}$ exist.
\end{lemma}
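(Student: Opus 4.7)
The strategy is to reduce the existence of the boundary values of the eigenvalues $e^{i\theta_j(z,r)}$ of $M(z,r)$ to a Schatten-norm continuity statement for the auxiliary operator $S(z,r)$ from (\ref{F: S(z,r)}), and then to invoke continuous dependence of isolated eigenvalues of compact operators \cite[IV.3.5]{Kato}.

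First, by \cite[Theorem 4.1]{Pu01FA} (the same result used in the proof of Lemma \ref{L: norm of M to 0 uniformly}) the non-trivial eigenvalues of $M(z,r)$ coincide with those of
\[
   S(z,r) - 1 = -2ir B_0^{1/2}(z) J \bigl(1 + r T_0(z) J\bigr)^{-1} B_0^{1/2}(z).
\]
Hence it suffices to prove that the eigenvalues of the compact operator $S(z,r)-1$ converge as $z = \lambda + iy \to \lambda + i0$.

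Next I would use the hypothesis $\lambda \in \Lambda(H_0, rV)$. By Assumption \ref{A: assumption 3}(ii), $T_0(z) \to T_0(\lambda+i0)$ in $\LpH{\infty}$; by (iii), $B_0(z) \to B_0(\lambda+i0)$ in $\LpH{\tilde p}$, and since these limits are non-negative and the positive square root is norm-continuous on non-negative compact operators, also $B_0^{1/2}(z) \to B_0^{1/2}(\lambda+i0)$ in $\LpH{2\tilde p}$. In the generic situation in which $J^{-1} + rT_0(\lambda+i0)$ is invertible --- by (iv), the only exceptional points are embedded eigenvalues of $H_r$ --- the middle factor $(1 + rT_0(z)J)^{-1}$ converges in $\LpH{\infty}$, so $S(z,r)-1$ converges in $\LpH{\tilde p}$, and Kato's theorem delivers the convergence of the eigenvalues.

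The main obstacle is the embedded-eigenvalue case, in which $(1 + rT_0(z)J)^{-1}$ develops a pole as $y \to 0+$; the heuristic is that the $B_0^{1/2}$ factors on either side regularize the singularity (which is essentially the content of the limiting absorption principle). Concretely, I would isolate the finite-rank Riesz projection onto the spectral subspace of $1 + rT_0(\lambda+i0)J$ corresponding to the eigenvalue $0$, decompose $S(z,r)-1$ into a ``regular'' piece (to which the previous paragraph applies) and a finite-rank ``singular'' piece, and show that the latter has a non-tangential limit. The finite-rank singular contribution is under control because $M(z,r)$ is unitary for $y > 0$, so each eigenvalue branch $e^{i\theta_j(z,r)}$ is a bounded unit-circle-valued holomorphic function of $z$; combining this with the meromorphic character of the singular factor as in Lemma \ref{L: S(r) is meromorphic}, the pole is removable after the symmetric $B_0^{1/2}$-sandwich, and the non-tangential limits exist as required.
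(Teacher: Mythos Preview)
Your first two paragraphs are essentially the paper's proof: reduce to $S(z,r)$ via \cite[Theorem 4.1]{Pu01FA}, use Assumption~\ref{A: assumption 3}(ii)--(iii) to get norm convergence of $T_0$ and $B_0$ (hence of $B_0^{1/2}$), invoke invertibility of $J^{-1}+rT_0(\lambda+i0)$ to get norm convergence of the middle factor, and conclude convergence of the eigenvalues. That is the complete argument.

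Where you go astray is the third paragraph. You treat the embedded-eigenvalue case as ``the main obstacle,'' but it never arises: the hypothesis $(\lambda,r)\notin\gamma(H_0,V)$ says exactly that $\lambda\in\Lambda(H_0,rV)$, and the paper uses this (via \cite[Lemma 1.4]{Az}) to conclude that $(J^{-1}+rT_0(\lambda+i0))^{-1}$ exists in norm. The resonance set $\gamma$ is, by construction, precisely the locus where that inverse fails to exist; excluding it is the whole point of the hypothesis. So your ``generic situation'' is the only situation, and the Riesz-projection / removable-singularity heuristic you sketch is not needed (and, as written, would not be a proof anyway --- unitarity of $M(z,r)$ bounds the eigenvalues but does not by itself force their non-tangential limits to exist, and ``meromorphic in $r$'' says nothing about the limit $y\to 0^+$).

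In short: drop the third paragraph, and recognize that $(\lambda,r)\notin\gamma$ already delivers the invertibility you called ``generic.'' What remains is correct and matches the paper.
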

\begin{proof}
By \cite[Theorem 4.1]{Pu01FA}, for $z \in \mbC \setminus \mbR$ the eigenvalues of $M(z;H_r,H_0)$ and $S(z,r)$ coincide
(see (\ref{F: S(z,r)})).
By Assumption \ref{A: assumption 3}(ii), the norm limit $B_0(\lambda+i0)$ exists.
Since $(\lambda,r) \notin \gamma,$
the norm limit $(J^{-1} + rT_0(\lambda+i0))^{-1}$ exists by \cite[Lemma 1.4]{Az}. %~\ref{L: Br exists iff ...}.
It follows that the norm limit $S(\lambda+i0,r)$ exists. Hence,
the limit values $e^{i\theta_j(\lambda+i0,r)}$ of the eigenvalues of $S(z,r)$ also exist.
%\margcom{This follows from \cite[IV.3.5]{Kato}}
\end{proof}

%This lemma means that the eigenvalues $e^{i\theta(\lambda+iy,r)}$ of $M(\lambda+iy;H_r,H_0)$
%have limit values as $y \to 0$ and $y \to \infty.$
In case $y = \Im z \to \infty$ the limit values of $e^{i\theta_j(\lambda+iy,r)}$
are equal to~$1$ \cite[Lemma 4.1]{Pu01FA}.
The functions $r \mapsto \theta_j(\lambda+iy,r)$ can be chosen to be continuous and such that $\theta_j(\lambda+iy,0)=0.$
We denote by $\theta_j(\lambda+i0,r)$ the limit $\lim_{y \to 0^+} \theta_j(\lambda+iy,r).$
This allows one to define Pushnitski's $\mu$\tire invariant \cite{Pu01FA} by formula
$$
  \mu(\theta,\lambda; H_1, H_0) = - \sum_{j=1}^\infty \SqBrs{\frac{\theta - \theta_j(\lambda+i0,1)}{2\pi}},
$$
where the path to $\theta_j(\lambda+i0,1)$ is taken along $y$ as $y$ goes from $+\infty$ to $0,$
and $[x] = \max \set{n \in \mbZ \colon n \leq x}.$
We also introduce the "absolutely continuous" part of Pushnitski's $\mu$\tire invariant by the same formula
$$
  \mua(\theta,\lambda; H_1, H_0) = - \sum_{j=1}^\infty \SqBrs{\frac{\theta - \theta^*_j(\lambda,1)}{2\pi}},
$$
where $e^{i\theta^*_j(\lambda,r)}$ are eigenvalues of $S(\lambda; H_r, H_0).$
In other words, $\mua(\theta,\lambda)$ (respectively, $\mu(\theta,\lambda)$)
is the spectral flow of eigenvalues $e^{i\theta^*_j(\lambda,r)}, \ j = 1,2,\ldots,$ (respectively, $e^{i\theta_j(\lambda+i0,r)}$)
of $S(\lambda; H_r,H_0)$ (respectively, $M(\lambda+i0; H_r,H_0)$)
through $e^{i\theta}$ in anticlockwise direction, as $r$ moves from $0$ to $1.$

%By $\mu(\theta,\lambda)$ we denote Pushnitski' $\mu$\tire invariant \cite{Pu01FA}.
\begin{prop} \label{P: xia = int mua} For a.e. $\lambda \in \Lambda$
$$
  \xia(\lambda; H_1,H_0) = - \frac 1{2\pi} \int_0^{2\pi} \mua(\theta,\lambda; H_1,H_0)\,d\theta.
$$
\end{prop}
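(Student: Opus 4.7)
The plan is to combine Theorem~\ref{T: det S = exp(xi ac) for resonance case}, which identifies $-2\pi i\,\xi^{(a)}(\lambda)$ with a continuous branch of $\ln\det S(\lambda;H_1,H_0)$, with a direct term-by-term evaluation of $\int_0^{2\pi}\mu^{(a)}(\theta,\lambda)\,d\theta$.

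First I would apply Theorem~\ref{T: det S = exp(xi ac) for resonance case}: for a.e.\ $\lambda\in\Lambda$, $-2\pi i\,\xi^{(a)}(\lambda)=\ln\det S(\lambda;H_1,H_0)$, the branch being chosen so that $r\mapsto\ln\det S(\lambda;H_r,H_0)$ is continuous on $[0,1]$ and vanishes at $r=0$. With the continuous labeling $r\mapsto\theta^*_j(\lambda,r)$ satisfying $\theta^*_j(\lambda,0)=0$, both $r\mapsto\ln\det S(\lambda;H_r,H_0)$ and $r\mapsto i\sum_j\theta^*_j(\lambda,r)$ are continuous, vanish at $r=0$, and differ by an integer multiple of $2\pi i$ (since they exponentiate to the same value $\det S(\lambda;H_r,H_0)=\prod_j e^{i\theta^*_j(\lambda,r)}$); by connectedness of $[0,1]$ they coincide throughout. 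Evaluation at $r=1$ gives
\begin{gather*}
  \xi^{(a)}(\lambda)=-\frac{1}{2\pi}\sum_{j=1}^\infty\theta^*_j(\lambda,1).
\end{gather*}

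Second, I would compute the right-hand side of the proposition. By the definition of $\mu^{(a)}$ and Fubini,
\begin{gather*}
  -\frac{1}{2\pi}\int_0^{2\pi}\mu^{(a)}(\theta,\lambda)\,d\theta
   =\frac{1}{2\pi}\sum_{j=1}^\infty\int_0^{2\pi}\SqBrs{\frac{\theta-\theta^*_j(\lambda,1)}{2\pi}}\,d\theta.
\end{gather*}
For any real $\alpha$, the substitution $u=(\theta-\alpha)/(2\pi)$ together with $\int_a^{a+1}\lfloor u\rfloor\,du=a$ (valid for every $a\in\mbR$) yields $\int_0^{2\pi}\lfloor(\theta-\alpha)/(2\pi)\rfloor\,d\theta=-\alpha$. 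Each term on the right therefore equals $-\theta^*_j(\lambda,1)/(2\pi)$, and the total equals $-\frac{1}{2\pi}\sum_j\theta^*_j(\lambda,1)$, which is $\xi^{(a)}(\lambda)$ by the first step.

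The main obstacle is justifying the two infinite-series manipulations: the identification $\ln\det S(\lambda;H_r,H_0)=i\sum_j\theta^*_j(\lambda,r)$ and the Fubini interchange. Both require the absolute summability $\sum_j|\theta^*_j(\lambda,1)|<\infty$, which will follow from the trace-class hypothesis $S-1_\lambda\in\frS_1(\clH_\lambda)$ (corresponding to the case $\tilde p=1$) combined with the elementary bound $|e^{ix}-1|\geq(2/\pi)|x|$ for $|x|\leq\pi$ on the tail of the eigenvalue sequence, together with the observation that only finitely many continuous eigenvalue paths can travel appreciably far from $1$ over the compact interval $[0,1]$.
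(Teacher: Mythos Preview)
Your proposal is correct and follows essentially the same route as the paper. The paper's proof is a one-line reference to Lemma~\ref{L: S(r) is meromorphic} (which supplies the continuity of $r\mapsto S(\lambda;H_r,H_0)$ that you use in your continuous-labeling argument) and Theorem~\ref{T: det S = exp(xi ac) for resonance case} (which you invoke explicitly); your write-up simply unpacks the implicit computation $\int_0^{2\pi}\lfloor(\theta-\alpha)/(2\pi)\rfloor\,d\theta=-\alpha$ and flags the summability issue that the paper leaves tacit.
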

\begin{proof} This follows from
Lemma \ref{L: S(r) is meromorphic} and Theorem \ref{T: det S = exp(xi ac) for resonance case}.
\end{proof}

% The argument of the proof of \cite[Theorem 6.1]{Pu01FA} shows that
% \begin{prop} \label{P: Thm 6.1 of Push01FA} For a.e. $\lambda \in \Lambda$
% $$
%   \lim_{y \to 0^+} \arg \det(1+JT_0(\lambda+iy)) = - \frac 1{2\pi} \int_0^{2\pi} \mu(\theta,\lambda)\,d\theta.
% $$
% \end{prop}
%
% We need to show that if $p = 1$
% (i.e. if $J T_0(z) \in \LpH{1}$ for all $z \in \mbC \setminus \mbR$),
% then for a.e. $\lambda \in \mbR$
% $$
%   \xi(\lambda) = \lim_{y \to 0^+} \arg \det(1+JT_0(\lambda+iy))
% $$
% \margproof
%

If $(\lambda,r_0) \in \gamma,$
then we denote by $\theta^\pm_j(\lambda,r_0)$ the left and the right limits of
$\theta_j(\lambda,r_0)$ as $r \to r_0,$ provided that these limits exist.

\begin{lemma} \label{L: theta+ - theta- is 2pi}
If $(\lambda_0,r_0) \in \gamma$ is a resonance point,
then for any $j=1,2,\ldots$ the limits $\theta^\pm_j(\lambda_0,r_0)$ exist and
their difference $\theta^+_j(\lambda_0,r_0) - \theta^-_j(\lambda_0,r_0)$
is an integer multiple of $2\pi.$
\end{lemma}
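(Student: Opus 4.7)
The plan is to use the analytic extension of $r \mapsto S(\lambda_0; H_r, H_0)$ through $r_0$ provided by Lemma \ref{L: S(r) is meromorphic}, together with the joint continuity of $(y, r) \mapsto \theta_j(\lambda_0+iy, r)$ on $(0, \infty) \times [0, 1]$, to first establish existence of the one-sided limits and then identify their difference modulo $2\pi\mbZ$.

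First, for existence of $\theta_j^\pm(\lambda_0, r_0)$. On a small complex disk $D$ about $r_0$ containing no other real resonance, the map $r \mapsto S(\lambda_0; H_r, H_0)$ extends holomorphically as a $(1+\clKH)$-valued function, and this extension is unitary on $D \cap \mbR$: each eigenvalue is unimodular for $r \neq r_0$ and stays unimodular at $r_0$ by continuity, which also rules out Puiseux branching of eigenvalue branches. The eigenvalues therefore admit holomorphic branches $e^{i\phi_k(r)}$ near $r_0$. On each component of $(D \cap \mbR) \setminus \{r_0\}$ the continuous function $r \mapsto \theta_j(\lambda_0+i0, r)$ agrees with one such branch up to a locally constant integer multiple of $2\pi$, so one may write $\theta_j(\lambda_0+i0, r) = \phi_{k_j^\pm}(r) + 2\pi n_j^\pm$ for $r$ on the corresponding side, and deduce $\theta_j^\pm(\lambda_0, r_0) = \phi_{k_j^\pm}(r_0) + 2\pi n_j^\pm$.

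Second, to obtain the $2\pi\mbZ$-valued jump it suffices to show $e^{i\phi_{k_j^+}(r_0)} = e^{i\phi_{k_j^-}(r_0)}$, i.e.\ that both sides track the same eigenvalue branch of the extended scattering matrix at $r_0$. I would exploit that for any fixed $y_0 > 0$ the function $r \mapsto \theta_j(\lambda_0+iy_0, r)$ is continuous on $[0, 1]$, including at $r_0$, because $M(\lambda_0+iy_0, r)$ is holomorphic in $r$ for $y_0 > 0$. Choosing $y_0$ sufficiently small (using Lemma \ref{L: M(z) is continuous}) makes $e^{i\theta_j(\lambda_0+iy_0, r)}$ sit in an arbitrarily small neighborhood of the branch value $e^{i\phi_{k_j^\pm}(r)}$ for $r$ on each side of $r_0$. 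Letting $r \to r_0$ along each side and invoking continuity of $r \mapsto \theta_j(\lambda_0+iy_0, r)$ at $r_0$ forces $e^{i\phi_{k_j^+}(r_0)}$ and $e^{i\phi_{k_j^-}(r_0)}$ to lie in a common arbitrarily small neighborhood; hence $k_j^+ = k_j^-$ and the jump lies in $2\pi\mbZ$.

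The main obstacle is that the iterated limits $r \to r_0$ and $y \to 0^+$ do not commute: the integer $\bigl(\theta_j^+(\lambda_0,r_0) - \theta_j^-(\lambda_0,r_0)\bigr)/(2\pi)$ is precisely the winding of $y \mapsto e^{i\theta_j(\lambda_0+iy, r_0)}$ as $y$ sweeps from a small positive value down to $0^+$, and this winding need not vanish. The matching argument must therefore be carried out modulo $2\pi$ and must not attempt to commute the two limits — it identifies the eigenvalue branches (and hence proves the Lemma) but deliberately leaves the integer jump itself undetermined, in agreement with the physical picture of embedded-eigenvalue resonance described in the Introduction.
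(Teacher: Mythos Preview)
Your overall strategy matches the paper's: use the analytic continuation of $r\mapsto S(\lambda_0;H_r,H_0)$ from Lemma~\ref{L: S(r) is meromorphic} and conclude that the eigenvalue branches pass continuously through $r_0$. The gap is at the sentence ``the continuous function $r\mapsto\theta_j(\lambda_0+i0,r)$ agrees with one such branch up to a locally constant integer multiple of $2\pi$.'' The numbers $e^{i\theta_j(\lambda_0+i0,r)}$ are eigenvalues of the operator $S(\lambda_0+i0,r)$ from~(\ref{F: S(z,r)}), which acts on $\hilb$; your $e^{i\phi_k(r)}$ are eigenvalues of the scattering matrix $S(\lambda_0;H_r,H_0)$ from~(\ref{F: stationary rep-n for SM: 2}), which acts on the fibre $\hilb_{\lambda_0}$. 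These are different operators on different spaces, and the coincidence of their nontrivial spectra is exactly the step you have omitted. The paper supplies it by comparing the two formulas via $\pi Z_0^*(\lambda)Z_0(\lambda)=B_0(\lambda+i0)$ together with the elementary identity $\dim\ker(AB-\mu)=\dim\ker(BA-\mu)$ for $\mu\neq 0$.

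Once that identification is made, your matching argument through the $y$-direction becomes unnecessary: continuity of $r\mapsto S(\lambda_0;H_r,H_0)$ through $r_0$ directly forces the left and right limits of $e^{i\theta_j(\lambda_0+i0,r)}$ to coincide, whence $\theta_j^+-\theta_j^-\in 2\pi\mbZ$. Note also that your step~4 does not circumvent the gap: the claim that $e^{i\theta_j(\lambda_0+iy_0,r)}$ lies near $e^{i\phi_{k_j^\pm}(r)}$ for small $y_0$ already presupposes the same spectral identification, and in addition tacitly requires the convergence as $y_0\to 0$ to be uniform in $r$ on each one-sided punctured neighbourhood of $r_0$ --- which is precisely what one cannot expect at a resonance point.
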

\begin{proof}
If $(\lambda,r) \notin \gamma,$ then by (the proof of) Lemma \ref{L: S(lambda+i0) exists}
the limit function $S(\lambda+i0, r)$ exists.
Using $\dim \ker(AB-\lambda) = \dim \ker(BA-\lambda)$ (see e.g. \cite[(4.24)]{Pu01FA})
and $\pi Z_r^*(\lambda)Z_r(\lambda) = B_r(\lambda + i0)$ (see e.g. \cite[\S 9]{Pu01FA})
%(\ref{F: (2.7) of BYa}),
it follows from Theorem \ref{T: stationary rep-n for SM: 2} and (\ref{F: S(z,r)})
that the eigenvalues of $S(\lambda+i0, r)$ and that of
$S(\lambda; H_r,H_0)$ coincide outside the resonance set $\gamma.$
Since by Lemma \ref{L: S(r) is meromorphic} the function $r \mapsto S(\lambda; H_r,H_0)$
is continuous, its eigenvalues $e^{i\theta^*_j(\lambda,r)}$
are continuous functions of~$r.$
This implies that $\theta^\pm_j(\lambda_0,r_0)$ exist and
$\theta^+_j(\lambda_0,r_0) - \theta^-_j(\lambda_0,r_0)$ is an integer multiple of~$2\pi.$
\end{proof}
% Usually, the difference $\theta^+_j(\lambda_0,r_0) - \theta^-_j(\lambda_0,r_0)$
% is non-zero only for one $j,$ and in this case it is usually equal to $\pm 2\pi.$

\begin{lemma} \label{L: mu = mua+mus} The difference
$$
  \mus(\theta,\lambda; H_1,H_0) := \mu(\theta,\lambda;H_1,H_0) - \mua(\theta,\lambda;H_1,H_0)
$$
is an integer-valued function, which does not depend on $\theta.$
The function $r \mapsto \mus(\lambda; H_r,H_0)$ is constant outside of the resonance set
$\gamma_\lambda:=\set{r \in [0,1] \colon (\lambda,r) \in \gamma},$
while at any point $r_0$ of $\gamma_\lambda$ its jump is equal to
\begin{equation} \label{F: sum of theta+ - theta-}
  \frac 1{2\pi} \sum_{j=1}^\infty (\theta^+_j(\lambda,r_0) - \theta^-_j(\lambda,r_0)).
\end{equation}
\end{lemma}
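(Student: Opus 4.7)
The plan is to identify $\mus(\lambda; H_r, H_0)$ with a locally constant, integer-valued function on the complement of the resonance set, built from the ``winding defect'' between two continuous families of eigenvalue arguments, and then read the jumps off at each resonance point.

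First I would fix the identification of eigenvalues. By the argument used in the proof of Lemma~\ref{L: theta+ - theta- is 2pi} (the dimension-counting identity $\dim\ker(AB-\lambda)=\dim\ker(BA-\lambda)$ applied to the stationary formula of Theorem~\ref{T: stationary rep-n for SM: 2} together with (\ref{F: S(z,r)})), for $(\lambda, r) \notin \gamma$ the eigenvalues of $S(\lambda+i0, r)$ and of the on-shell scattering matrix $S(\lambda; H_r, H_0)$ coincide. With matched indexing, $\theta_j(\lambda+i0, r) \equiv \theta_j^*(\lambda, r) \pmod{2\pi}$, and I set
\begin{equation*}
  n_j(r) := \frac{\theta_j^*(\lambda, r) - \theta_j(\lambda+i0, r)}{2\pi} \in \mbZ.
\end{equation*}
On each connected component of $[0,1] \setminus \gamma_\lambda$, the representation (\ref{F: S(z,r)}) together with norm continuity of $(J^{-1} + rT_0(\lambda+i0))^{-1}$ where it exists (Assumption~\ref{A: assumption 3} and \cite[Lemma 1.4]{Az}) makes $r \mapsto S(\lambda+i0, r)$ continuous, so each $\theta_j(\lambda+i0, r)$ varies continuously; the on-shell arguments $\theta_j^*(\lambda, r)$ are continuous on all of $[0,1]$ by Lemma~\ref{L: S(r) is meromorphic}. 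Hence $n_j$ is locally constant on $[0,1] \setminus \gamma_\lambda$.

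Next I would compute $\mus$ from the floor-function identity $[(x - 2\pi k)/(2\pi)] = [x/(2\pi)] - k$ for $k \in \mbZ$. Applied termwise to the definitions of $\mu$ and $\mua$ this gives
\begin{equation*}
  \mua(\theta, \lambda; H_r, H_0) = \mu(\theta, \lambda; H_r, H_0) + \sum_j n_j(r),
\end{equation*}
so $\mus(\lambda; H_r, H_0) = -\sum_j n_j(r)$ is independent of $\theta$, integer-valued, and, by the previous paragraph, constant on each connected component of $[0,1] \setminus \gamma_\lambda$. The series has only finitely many nonzero terms because $S(\lambda; H_r, H_0) - 1_\lambda \in \frS_{\tilde p}(\hilb_\lambda)$ by Theorem~\ref{T: stationary rep-n for SM: 2}, which forces $e^{i\theta_j^*(\lambda,r)} \to 1$ and hence $\theta_j^* - \theta_j \to 0$ as $j \to \infty$.

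Finally, for a resonance point $r_0 \in \gamma_\lambda$, Lemma~\ref{L: theta+ - theta- is 2pi} furnishes the one-sided limits $\theta_j^\pm(\lambda, r_0)$ of $\theta_j(\lambda+i0, r)$ together with continuity of $\theta_j^*(\lambda, \cdot)$ at $r_0$. Passing to $r \to r_0 \pm 0$ in the definition of $n_j(r)$ yields $n_j^\pm = (\theta_j^*(\lambda, r_0) - \theta_j^\pm(\lambda, r_0))/(2\pi)$, whence
\begin{equation*}
  \mus(\lambda; H_{r_0 + 0}, H_0) - \mus(\lambda; H_{r_0 - 0}, H_0) = -\sum_j (n_j^+ - n_j^-) = \frac{1}{2\pi} \sum_j \brs{\theta_j^+(\lambda, r_0) - \theta_j^-(\lambda, r_0)},
\end{equation*}
which is (\ref{F: sum of theta+ - theta-}). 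The main delicate point is the consistent labelling of eigenvalues across $r_0$: one must pair $\theta_j^+$ with $\theta_j^-$ by continuity of the on-shell family $e^{i\theta_j^*(\lambda, r)}$ and then transfer that labelling back to $\theta_j(\lambda+i0, r)$ through the spectral identification outside $\gamma_\lambda$; a secondary issue is that the discreteness of $\gamma_\lambda$ in a neighbourhood of $r_0$, needed so that $n_j(r)$ can be defined on both sides, is supplied by Assumption~\ref{A: assumption 3}(ii) and the Fredholm analyticity of $1 + rJT_0(\lambda+i0)$.
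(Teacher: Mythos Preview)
Your argument is essentially correct but takes a different route from the paper. The paper proves the lemma by a homotopy argument: it considers, for each $y\in(0,\infty]$, the L-shaped path $\tau_y$ in $\mbC_+\times[0,1]$ running from $(\lambda+i0,1)$ vertically to $(\lambda+iy,1)$ and then horizontally to $(\lambda+iy,0)$, pushes it into the unitary group via $M(z,r)$, and uses joint continuity of $M$ (Lemma~\ref{L: M(z) is continuous}) together with the uniform decay of Lemma~\ref{L: norm of M to 0 uniformly} to conclude that the spectral flow along $\tau_\infty$ (which is $\mu$) equals that along $\tau_\eps$; letting $\eps\to 0$ and invoking Lemma~\ref{L: theta+ - theta- is 2pi} identifies the defect with the jumps. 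Your approach is instead termwise and algebraic: you introduce the integer defect $n_j(r)=(\theta_j^*-\theta_j)/2\pi$ and use the floor identity to read off $\mus=-\sum_j n_j$ directly. This is more elementary and makes the independence from $\theta$ completely explicit; the paper's approach is more conceptual and avoids the eigenvalue-labelling bookkeeping.

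One step in your argument needs a stronger justification than you give. You assert that each lift $\theta_j(\lambda+i0,r)$ varies continuously in $r$ on components of $[0,1]\setminus\gamma_\lambda$ because $r\mapsto S(\lambda+i0,r)$ is norm-continuous. But $\theta_j(\lambda+i0,r)$ is by definition the limit of $\theta_j(\lambda+iy,r)$ along the $y$-path from $+\infty$; norm continuity of $S(\lambda+i0,r)$ in $r$ alone only controls the eigenvalue \emph{set} $\{e^{i\theta_j}\}$, not these particular real lifts. To get continuity of the lifts in $r$ you need joint continuity of $M(z,r)$ on $\mbC_+\times[0,1]$ (Lemma~\ref{L: M(z) is continuous}) together with the uniform anchoring at $y=\infty$ (Lemma~\ref{L: norm of M to 0 uniformly}), so that the $y$-path lifts are themselves jointly continuous in $(y,r)$ and hence pass to a continuous limit as $y\to 0^+$. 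Once you add this, your argument goes through; note that it then uses exactly the same two lemmas that drive the paper's homotopy argument, so the two proofs are closer than they first appear.
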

\begin{proof}
Let $\tau_y = \tau_y(\lambda),$ $y \in (0,\infty],$ be a path in $\set{(z,r) \in \mbC_+\times [0,1]},$
consisting of two straight lines $(\lambda +i0,1) \to (\lambda +iy,1)$
and $(\lambda +iy,1) \to (\lambda +iy,0),$ if $y<\infty,$ and of one straight line $(\lambda +i0,1) \to (\lambda +i\infty,1),$
if $y = \infty.$ We will identify these paths with their images in the set of unitary operators under the map $M(z,r).$
Any two paths $\tau_y$ and $\tau_\eps,$
connecting the unitary operator $M(\lambda+i0,1)$ with
the identity operator $1$ are homotopic.
Since by Lemma \ref{L: M(z) is continuous} the function $M(z,r)$ is continuous,
the $\mu$\tire invariant (i.e. the spectral flow of eigenvalues on the unit circle), computed along these two
paths, coincide. Lemma \ref{L: norm of M to 0 uniformly}
implies that the spectral flow along $\tau_\infty$ is also the same.
Hence, after letting $\eps \to 0,$ Lemma \ref{L: theta+ - theta- is 2pi} implies that $\mus$ does not depend on $\theta.$
The sum in (\ref{F: sum of theta+ - theta-}) is finite, since $\theta^+_j(\lambda,r_0) - \theta^-_j(\lambda,r_0)$
is a multiple of $2\pi$ and $\theta_j^\pm(\lambda,r_0) \to 0$ as $j \to \infty.$
% By \cite[Theorem 4.1]{Pu01FA}, the $\mu$\tire invariant,
% computed along $\tau_\eps,$ coincides with the spectral
% flow of eigenvalues of $S(\lambda + i\eps; H_0,G,rJ).$
% This spectral flow does not depend on $\eps>0.$
\end{proof}
It follows that
\begin{equation} \label{F: mu sing = sum gamma lambda}
  \mus(\lambda; H_1,H_0) = \frac 1{2\pi} \sum_{r \in \gamma_\lambda} \sum_{j=1}^\infty (\theta^+_j(\lambda,r) - \theta^-_j(\lambda,r)).
\end{equation}
Since $\gamma_\lambda \subset [0,1]$ consists of real poles of a meromorphic function,
the first sum in this formula is also finite.

% So far the condition $p = \infty$ was enough for our purposes.
% From now on we assume that $p=1.$

% By $\mu(\theta,\lambda)$ the limit spectral flow of $M(\lambda+i\eps; H_r,H_0)$
% as $\eps \to 0,$ then the last theorem means that $\mus(\theta,\lambda) := \mu(\theta,\lambda) - \mua(\theta,\lambda)$
% is an integer valued function, not depending on $\theta.$
% Since $\mu$ is actually Pushnitski's invariant, it follows from \cite[(21)]{Az}
% and \cite[(1.12)]{Pu01FA}
% that the "singular" part $\xis$ of the spectral shift function is a.e. integer valued.

%For the following lemma we need $VR_z(H) \in \LpH{1}.$ \margcom{???}
\begin{lemma} \label{L: xi = lim arg D(y)}
If Assumption \ref{A: assumption 3} holds with $p=1,$ then
the following equality holds true for a.e. $\lambda$
$$
  \xi(\lambda) = - \frac 1{2\pi} \int_0^{2\pi} \mu(\theta,\lambda)\,d\theta.
$$
\end{lemma}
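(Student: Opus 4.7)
The plan is to deduce this from Pushnitski's original formula \cite[(1.12)]{Pu01FA}, since Assumption \ref{A: assumption 3} with $p=1$ gives exactly the trace-class framework under which Pushnitski's argument applies. First I would verify the requisite trace-class inputs: by Lemma \ref{L: M(z) is continuous} with $p=1$, the operator $M(z,1)-1$ lies in $\frS_1(\hilb)$, so the Fredholm determinant $D(z):=\det M(z,1)$ is well-defined and analytic on $\mbC\setminus\mbR$. Combined with $V R_z(H_0)\in\frS_1(\hilb)$ from (\ref{F: VR in trace class}), the standard Krein trace formula then yields
$$
  \xi(\lambda) = \frac{1}{\pi}\lim_{y\to 0^+}\arg D(\lambda+iy)
$$
for a.e.\ $\lambda$, where the branch of $\arg$ is pinned by $\arg D(\lambda+iy)\to 0$ as $y\to+\infty$; the existence of this limit along the vertical path is guaranteed by Lemma \ref{L: norm of M to 0 uniformly}.

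Next, since $M(z,1)$ is a trace-class perturbation of the identity, its eigenvalues $e^{i\theta_j(z,1)}$ cluster at $1$ with $\sum_j |\theta_j(z,1)|<\infty$, so
$$
  \arg D(z) = \sum_{j=1}^{\infty}\theta_j(z,1),
$$
with absolute convergence. Following each eigenvalue continuously along the vertical segment $z = \lambda+iy$ from $y=+\infty$ (where $\theta_j=0$) down to $y=0^+$, the endpoint value $\theta_j(\lambda+i0,1)$ records the net signed angular motion of $e^{i\theta_j(z,1)}$ on the unit circle, which is exactly the raw material out of which the $\mu$-invariant is assembled.

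The heart of the argument is the Fubini-type identity
$$
  \sum_{j=1}^{\infty}\theta_j(\lambda+i0,1) = -\int_0^{2\pi}\mu(\theta,\lambda;H_1,H_0)\,d\theta,
$$
which reorganizes endpoint phases into an integral of the spectral-flow counts through each $e^{i\theta}$. Once this is established, dividing by $\pi$ and comparing with the Krein formula above delivers the lemma. The main obstacle is the rigorous justification of this exchange: one must control the convergence of the series as $y\to 0^+$, handle the branch discontinuity at $\theta=0$, and verify that the finitely many eigenvalues contributing nontrivial spectral flow at a given $\theta$ account for the full $\arg D$. All of the technical ingredients are present in \cite[\S4]{Pu01FA}, and the present lemma is then an application of that argument in the setting of Assumption \ref{A: assumption 3} with $p=1$.
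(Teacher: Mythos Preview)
Your proposal has a genuine gap at the step where you invoke ``the standard Krein trace formula'' to obtain $\xi(\lambda)$ as a boundary value of $\arg D$. Krein's classical result \cite{Kr53MS} requires $V\in\LpH{1}$, whereas Assumption~\ref{A: assumption 3} with $p=1$ only gives the relatively trace-class condition $VR_z(H_0)\in\LpH{1}$. Moreover, the $\xi$ in this paper is defined via the trace-compatible integral $\xi(\phi)=\int_0^1\Tr(V\phi(H_s))\,ds$ from \cite{Az,AS2}, not via Krein's perturbation-determinant construction, so the identification of this $\xi$ with the one entering Pushnitski's formula is precisely what must be established. The Remark immediately following the lemma makes this explicit: the relation $\xi(\lambda)=\lim_{y\to 0^+}\arg D_{H/H_0}(\lambda+iy)$ is a \emph{consequence} of the lemma, known previously only for trace-class $V$; had it been available under the weaker hypothesis, then indeed \cite[Theorem~6.1]{Pu01FA} would close the argument as you propose. (Incidentally, your constant $\tfrac{1}{\pi}$ should be $-\tfrac{1}{2\pi}$.)

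The paper closes this gap by a direct computation rather than by citation. It represents $M(z;H_r,H_0)$ as a time-ordered exponential via the differential equation $\tfrac{d}{dr}M=-2iy\,R_z(H_r)VR_{\bar z}(H_r)M$, then uses \cite[Lemma~A.3]{Az} to obtain $i\arg\det M(z;H_r,H_0)=\int_0^r\Tr\brs{V(R_{\bar z}(H_s)-R_z(H_s))}\,ds$. Pairing with a test function $\phi\in C_c^\infty(\Lambda)$ and letting $y\to 0^+$ via Stone's formula (with an $\LpH{1}$-convergence argument that splits $V=VE^{H_s}_\Delta+VE^{H_s}_{\mbR\setminus\Delta}$) produces $2\pi\int_0^1\Tr(V\phi(H_s))\,ds$ on the right, which is $2\pi\,\xi(\phi)$ by definition. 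This links the paper's $\xi$ to $\arg\det M(\lambda+i0)$ without presupposing Krein's formula; the final passage from $\arg\det M$ to $\mu$ via the eigenvalue sum is then as you describe.
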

\begin{proof}
It follows from \cite[(4.4)]{Pu01FA} that in $\LpH{p}$
$$
  \frac d{dr} M(z; H_r, H_s)\Big|_{r=s} = -2iy R_z(H_s) V R_{\bar z} (H_s),
$$
where $y = \Im z.$
This equality and the multiplicative property $M(z; H_{2}, H_{0}) = M(z; H_{2}, H_{1})M(z; H_{1}, H_{0})$
imply that
$$
  \frac d{dr} M(z; H_r, H_s) = -2iy R_z(H_r) V R_{\bar z} (H_r) M(z; H_r, H_s).
$$
Hence, \cite[Lemma A.1]{Az} %Lemma \ref{L: T exp} %
implies that
$$
  M(z; H_r, H_{0}) = \Texp\brs{-2iy \int_{0}^r R_z(H_s) V R_{\bar z} (H_s)\,ds}.
$$
Since $p=1,$ by %Lemma \ref{L: det Texp = exp Tr}
\cite[Lemma A.3]{Az}
\begin{equation*}
  \begin{split}
    i \arg \det M(z; H_r,H_{0}) & = -2iy \int_{0}^r \Tr(R_z(H_s) V R_{\bar z}(H_s)) \,ds.
    \\ & = \int_{0}^r \Tr(V (R_{\bar z}(H_s) - R_z(H_s)) \,ds,
  \end{split}
\end{equation*}
where the value of $\arg$ is chosen such that the left hand side is continuous with respect to $r.$
Hence, for $\phi \in C_c^\infty(\Lambda(H_0,V)),$ we have
\begin{multline}
  i \int_\Lambda  \arg \det M(\lambda+iy; H_r,H_0) \phi(\lambda)\,d\lambda
   \\ = \int_\Lambda \brs{\int_{0}^r \Tr(V (R_{\lambda-iy}(H_s) - R_{\lambda+iy}(H_s)))\phi(\lambda) \,ds}\,d\lambda.
\end{multline}

If we take the limit $\lim_{y \to 0^+}$ then on the left hand side we can interchange
the limit and the integral, since $y \mapsto \det M(\lambda+iy; H_r,H_s)$ is continuous up to the cut along $\Lambda.$
The expression under the second integral is continuous by Lemma \ref{L: Tr(V Rz(Hr)) is continuous}.
Hence, we can interchange the integrals in it:
$$
  \int_{0}^r \brs{\int_\Lambda \Tr(V (R_{\lambda-iy}(H_s) - R_{\lambda+iy}(H_s)))\phi(\lambda) \,d\lambda}\,ds.
$$
By the same reason, we can interchange the $\lambda$\tire integral and the trace to get
$$
  \int_{0}^r \brs{\Tr\SqBrs{\int_\Lambda V (R_{\lambda-iy}(H_s) - R_{\lambda+iy}(H_s))\phi(\lambda) \,d\lambda}}\,ds.
$$

By Stone's formula (see e.g. \cite{RS1}) and a simple approximation argument,
the inner integral converges (in $so$\tire topology) to $- 2 \pi i V\phi(H_s)$
as $y \to 0^+.$
%\margcom{Show the convergence in $\LpH{1}$ topology. Done.}
The convergence in $\LpH{1}$\tire topology can be shown in the following way.
Write $V = VE^{H_s}_\Delta + VE^{H_s}_{\mbR \setminus \Delta}.$
Here $\Delta$ is a big enough segment, containing $\supp(\phi).$ For $VE^{H_s}_\Delta$ the convergence
in $\LpH{1}$ follows from e.g. \cite[(2.2)]{Pu01FA}.
For $VE^{H_s}_{\mbR \setminus \Delta}$ the convergence follows from the monotone decreasing to $0$ of the family
$i \sqrt{\abs{V}}E^{H_s}_{\mbR \setminus \Delta}[R_{\lambda-iy}(H_s) - R_{\lambda+iy}(H_s)]\sqrt{\abs{V}} \in \LpH{1}$ for small enough values of $y.$

Hence, since the integral converges in $\LpH{1}$ topology, we can interchange the limit
and the trace. Finally, we can interchange the $r$\tire integral and the limit by the dominated convergence theorem.

Hence,
$$
  - \int_\Lambda  \arg \det M(\lambda+i0; H_1,H_{0}) \phi(\lambda)\,d\lambda
   = 2\pi \int_{0}^1 \Tr\brs{V\phi(H_s)}\,ds.
$$
The right hand side is equal to $2\pi \xi_{H_1,H_0}(\phi).$ Since $\xi$ is absolutely continuous
\cite[Theorem 2.9]{AS2},
it follows that for a.e. $\lambda$
$$
  2\pi \xi(\lambda) = - \arg \det M(\lambda+i0; H_1,H_0).
$$
It follows that
$$
  \xi(\lambda) = - \frac 1{2\pi} \sum_{j=1}^\infty \theta_j(\lambda+i0,1; H_1,H_0) =
  - \frac 1{2\pi} \int_0^{2\pi} \mu(\theta,\lambda)\,d\theta.
$$
\end{proof}

{\bf Remark.} This lemma together with the argument of the proof of \cite[Theorem 6.1]{Pu01FA}
implies that
\begin{equation} \label{F: xi = lim arg D}
   \xi(\lambda) = \lim_{y \to 0^+} \arg D_{H/H_0}(\lambda+iy),
\end{equation}
where $D_{H/H_0}(z) = \det(1+JT_0(z)),$ which for trace class perturbations was
proved by \KreinMG\ in \cite{Kr53MS}. Conversely, if (\ref{F: xi = lim arg D}) was known
for perturbations $V$ satisfying $VR_z(H) \in \LpH{1},$ then combined with \cite[Theorem 6.1]{Pu01FA}
it would imply Lemma \ref{L: xi = lim arg D(y)}.

\begin{thm} If Assumption \ref{A: assumption 3} holds with $p=1,$ then for a.e. $\lambda \in \mbR$
$\xis(\lambda) = -\mus(\lambda),$ and hence, $\xis$ is an integer-valued function.
\end{thm}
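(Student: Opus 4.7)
The plan is to combine the two integral representations already established in the excerpt and read off the result almost immediately, using that $\mu^{(s)}$ is $\theta$-independent.

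First, I would apply Lemma \ref{L: xi = lim arg D(y)}, which under Assumption \ref{A: assumption 3} with $p=1$ gives
$$
  \xi(\lambda) = - \frac{1}{2\pi} \int_0^{2\pi} \mu(\theta,\lambda)\,d\theta
$$
for a.e.\ $\lambda \in \mathbb{R}$. Next I would invoke Proposition \ref{P: xia = int mua}, which yields the parallel formula
$$
  \xi^{(a)}(\lambda) = - \frac{1}{2\pi} \int_0^{2\pi} \mu^{(a)}(\theta,\lambda)\,d\theta
$$
for a.e.\ $\lambda$. Subtracting these two equalities, and using the decompositions $\xi = \xi^{(a)} + \xi^{(s)}$ and $\mu = \mu^{(a)} + \mu^{(s)}$, I obtain
$$
  \xi^{(s)}(\lambda) = - \frac{1}{2\pi} \int_0^{2\pi} \mu^{(s)}(\theta,\lambda)\,d\theta.
$$

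Now I would apply Lemma \ref{L: mu = mua+mus}, which asserts that $\mu^{(s)}(\theta,\lambda)$ is independent of $\theta$ and integer-valued. Therefore the integral on the right collapses to $2\pi\,\mu^{(s)}(\lambda)$, giving $\xi^{(s)}(\lambda) = - \mu^{(s)}(\lambda)$ for a.e.\ $\lambda \in \mathbb{R}$. Since $\mu^{(s)}$ takes integer values, so does $\xi^{(s)}$.

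The main conceptual obstacle has already been handled upstream: all the real work lies in Lemma \ref{L: mu = mua+mus} (the singular jumps are integer multiples of $2\pi$, via the homotopy argument between the paths $\tau_y$ and $\tau_\infty$ together with the resolvent-resonance analysis in Lemma \ref{L: theta+ - theta- is 2pi}) and in Lemma \ref{L: xi = lim arg D(y)} (where the interchange of limits, integrals, and traces is justified under the hypothesis $VR_z(H) \in \frS_1(\mathcal{H})$). Once those are in place, the present theorem is a direct subtraction.
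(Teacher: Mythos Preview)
Your proposal is correct and follows exactly the same approach as the paper: the paper's proof simply cites Lemmas \ref{L: mu = mua+mus}, \ref{L: xi = lim arg D(y)} and Proposition \ref{P: xia = int mua}, and you have spelled out precisely how these three results combine via subtraction and the $\theta$-independence of $\mu^{(s)}$.
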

\begin{proof} This follows from Lemmas \ref{L: mu = mua+mus},
\ref{L: xi = lim arg D(y)} and Proposition \ref{P: xia = int mua}.
% $$
%   \xi(\lambda) = - \frac 1{2\pi} \int_0^{2\pi} \mu(\theta,\lambda)\,d\theta.
% $$
% We also have \margcom{Why?}
% $$
%   \xia(\lambda) = - \frac 1{2\pi} \int_0^{2\pi} \mua(\theta,\lambda)\,d\theta.
% $$
% By Lemma \ref{L: mu = mua+mus}, it follows that \margcom{Oops !!!}
% $\xis(\lambda) = - \mus(\lambda).$
\end{proof}
It follows that $-\xis(\lambda)$ is equal to the right hand side of (\ref{F: mu sing = sum gamma lambda}).

\begin{cor} If the Assumption \ref{A: assumption 3} holds with $p=1,$ then for a.e. $\lambda \in \mbR$
$$
  \det S(\lambda; H_1,H_0) = e^{-2\pi i \xi(\lambda)}.
$$
\end{cor}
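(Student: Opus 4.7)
The corollary should follow almost immediately by combining the preceding theorem with Theorem \ref{T: det S = exp(xi ac) for resonance case}. The plan is to write $\xi = \xia + \xis$ and handle each piece separately.

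First, I would invoke Theorem \ref{T: det S = exp(xi ac) for resonance case}, which gives, for a.e.\ $\lambda \in \LambdaA,$
\begin{equation*}
  \det S(\lambda; H_1, H_0) = e^{-2\pi i \xia(\lambda)},
\end{equation*}
with the branch chosen so that $r \mapsto \ln \det S(\lambda; H_r, H_0)$ is continuous along the piecewise linear path.

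Next, I would use the theorem immediately preceding the corollary, which says that $\xis(\lambda) = -\mus(\lambda)$ takes integer values a.e.\ (under Assumption \ref{A: assumption 3} with $p = 1$). Consequently $e^{-2\pi i \xis(\lambda)} = 1$ for a.e.\ $\lambda.$ Multiplying the two identities yields
\begin{equation*}
  \det S(\lambda; H_1, H_0) = e^{-2\pi i \xia(\lambda)} \cdot e^{-2\pi i \xis(\lambda)} = e^{-2\pi i (\xia(\lambda) + \xis(\lambda))} = e^{-2\pi i \xi(\lambda)}.
\end{equation*}

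There is really no obstacle here: the heavy lifting has already been done in establishing the stationary representation (Theorem \ref{T: stationary rep-n for SM: 2}), the multiplicative representation of $S$ as a $\Texp$ (Theorem \ref{T: S = Texp for res.case}), the absolutely continuous Birman--\Krein\ identity (Theorem \ref{T: det S = exp(xi ac) for resonance case}), and the integrality of $\xis$ via the decomposition $\mu = \mua + \mus$ (Lemma \ref{L: mu = mua+mus}). The only point to be careful about is that the exceptional $\lambda$-null set from Theorem \ref{T: det S = exp(xi ac) for resonance case}, Lemma \ref{L: xi = lim arg D(y)}, and the integrality statement for $\xis$ must be taken as the union of the respective null sets, which is still null. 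The a.e.\ equality then extends from $\Lambda$ to all of $\mbR$ because $\mbR \setminus \LambdaA$ is discrete and $\xi$ vanishes a.e.\ outside $\LambdaA.$
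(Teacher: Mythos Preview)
Your proof is correct and follows exactly the route the paper intends: the corollary is stated without proof precisely because it is the immediate combination of Theorem~\ref{T: det S = exp(xi ac) for resonance case} (giving $\det S = e^{-2\pi i \xia}$) with the preceding theorem (giving $\xis \in \mbZ$, hence $e^{-2\pi i \xis}=1$). The only quibble is your final sentence: the claim that $\mbR\setminus\LambdaA$ is discrete and that $\xi$ vanishes a.e.\ there is not part of the paper's standing assumptions, so you should either justify it from the definition of $\LambdaA$ in \cite{Az} or simply state the identity for a.e.\ $\lambda\in\LambdaA$, which is all the preceding results actually deliver.
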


It follows directly from the definition of $\xis$ that
if $V \geq 0,$ then the function $\xis$ is non-negative.
The function $\xis$ is non-zero on $\LambdaA,$
if there is at least one moving embedded eigenvalue.
The last is not necessary: embedded eigenvalues, present at $r=0,$ can just stay constant,
or, what seems to be more likely, disappear for $r>0.$
In this case, $\xis$ is zero outside of $\set{\lambda_j}.$
Absolute continuity of $\xis$ implies that $\xis$ is zero on $\LambdaA.$

% everywhere, and so
% for any $\phi \in C_c^\infty(\LambdaA)$
% $$
%   \xis(\phi) = \int_\Gamma \Tr(\dot H_r \phi(H_r^{(s)}))\,dr = 0.
% $$
% This implies that the trace of $\dot H_r E^{H_r}_j$ is zero for a.e. $r \in [0,1],$
% where $E^{H_r}_j$ is the orthogonal projection onto the one-dimensional subspace,
% generated by $j$th eigenfunction.

Though disappearance or stability of embedded eigenvalues may seem to be unlikely,
the consideration of one-dimensional short range Schr\"odinger operators with embedded eigenvalues shows,
that it is plausible (Appendix \ref{S: appendix example}).
Indeed, in order to ensure square summability of eigenfunction, the barriers and pits of the potential
and the energy of the eigenfunction should be finely tuned, see e.g. \cite[Chapter 4]{EK} or Appendix A.
A slight change of potential or of boundary condition destroys square summability of the eigenfunction.

Note that examples of Schr\"odinger operators with embedded eigenvalues have artificial potentials.
At the same time, some natural examples of magnetic Schr\"odinger operators
$(-i\nabla-a)^2+V$
have stable and moving (as $r$ changes)
embedded eigenvalues, see \cite{AHS,BBR,ABBFR}. One can look for examples with non-zero $\xis$
on the absolutely continuous spectrum in this direction.

% Let us try to show by another method,
% that the jump of the sum of eigenvalues $\theta_j$ is not zero,
% when $r$ crosses the resonance point $r_0.$ We have
% \begin{equation}
%   \begin{split}
%   \frac d {dr}\log \det M(z,r) & = \frac d {dr}\Tr \log M(z,r)
%     \\ & = \Tr\brs{\frac {dM(z,r)} {dr}  \cdot M^{-1}(z,r)}
%      = \Tr\brs{\frac d{dr}\brs{\frac {H_r-\bar z}{H_r - z}} \cdot \frac {H_r-z}{H_r - \bar z}}
%     \\ & = -2iy \Tr\brs{R_z^r V R_z^r \frac {H_r-z}{H_r - \bar z}}
%      = -2iy \Tr\brs{V R_{\bar z}^r R_z^r}
%     \\ & = \Tr\brs{V (R_{\bar z}^r - R_z^r)}.
%   \end{split}
% \end{equation}
% Let $z = \lambda_0 + i\eps.$
% Taking the integral over $[-\delta,\delta]$ of this equality, using Stone's formula \cite{RS1}
% and letting $\delta \to 0$ imply

\appendix

\section{Example} \label{S: appendix example}
\subsection{An example of short range Schr\"odinger operator with embedded eigenvalues}
% In this section we are going to show that for some $\dm$
% the affine space $-\Delta + W + C_c(\mbR^\dm)$
% satisfies the Assumption \ref{A: assumption 3}, where $W$ is a short range potential such that
% $-\Delta + W$ has embedded eigenvalues.

We construct a short range Schr\"odinger operator (in the sense of \cite[Definition 3.1]{Agm})
on a half-line $(0,\infty)$
with an embedded eigenvalue,
using an idea from \cite{EK}, which goes back to \cite{vNW}.
Other examples of Schr\"odinger operators with embedded eigenvalues can be found in \cite[Chapter 4]{EK},
but those examples are not short range.

Let $0 < \lambda < w,$ let $a_0 < b_0 < a_1 < b_1 < \ldots$ and let
$$
  W(x) = \left\{
    \begin{array}{cl}
      w, & \text{if} \ x \in [a_n,b_n] \ \ \text{for some $n$} \\
      0, & \text{if otherwise.}
    \end{array}
    \right.
$$
%So, the potential $W(x)$ is switched on at $a_n$ and is switched off at $b_n.$
If for some $s > \frac 12$ \ $b_n-a_n = n^{-s},$ then $W$
% \begin{equation} \label{F: length = n to s}
%  \abs{n}^{-s}
% \end{equation}
satisfies the estimate \cite[(1.3)]{Agm},
% $$
%   \sup\limits_{x \in \mbR} \SqBrs{(1+\abs{x})^{2+2\eps} \int_{\abs{y-x}\leq 1} \abs{W(y)}^2 \abs{y-x}^{-1+\mu}\,dy} < \infty,
% $$
and, consequently, $W(x)$ is a short range potential \cite{Schech}, \cite[\S 3, Remark 2]{Agm}.

If $a_{n+1}-b_n, \ n=0,1,2\ldots,$ are chosen to be equal to the wavelength of a free particle with energy $\lambda,$
then the eigenfunction $\psi$ enters the interval $(b_n,a_{n+1})$
and leaves it with the same phase.
If $a_n$ is chosen such that $\frac {\psi'(a_n)}{\psi(a_n)} = -\sqrt{w-\lambda}$ then $\psi$ enters and leaves the barrier
$[a_n,b_n]$ with the same phase.
% Due to the choice of intervals $(b_n,a_{n+1}),$ the function $\psi$
% enters and leaves each interval $(a_n,b_n), n = 0,1,2,\ldots,$ with the same phase.
% The switch-on moments for $n = 1,2,\ldots,$ are chosen in the same way.
The boundary condition and the first barrier are chosen
so that $\psi$ enters the first barrier with $\frac {\psi'(a_0)}{\psi(a_0)} = -\sqrt{w-\lambda}.$
The amplitude of $\psi$ exponentially decreases on $[a_n,b_n]$ and is constant on $(b_n,a_{n+1}).$
If $s = \frac 34,$ then the sum of the lengthes of $[a_n,b_n]$ will be $\infty,$ and
it follows that $\psi$ decreases as $e^{- c \sqrt[4]{x}},$ $c>0.$

One can construct similarly a short range Schr\"odinger operator with an embedded eigenvalue on the whole $\mbR,$
e.g. by taking the boundary value $\psi(0)=1,$ $\psi'(0)=0$ and reflecting the potential $W(x)$ above.

\begin{lemma} \label{L: example of SR potential with e.e.} $W$ is a short range potential.
  The operator $-\frac{d^2}{dx^2} + W$ has an embedded eigenvalue.
\end{lemma}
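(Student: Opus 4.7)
The construction has two independent parts: verifying that $W$ is short range in the sense of Agmon, and producing a square-integrable solution of $-\psi'' + W\psi = \lambda \psi$ at energy $\lambda$.

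For the short-range condition, $W$ is bounded by $w$ and supported on the disjoint union $\bigcup_n [a_n, b_n]$ with $b_n - a_n = n^{-s}$; since the free gaps $a_{n+1}-b_n$ have the fixed length $2\pi/\sqrt{\lambda}$, one has $a_n \asymp n$, and Agmon's condition (1.3) reduces to convergence of a series dominated by $\sum n^{-s}$ up to mild polynomial factors. The assumption $s > 1/2$ is then more than enough; I would just write the defining integral out once and estimate term by term.

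For the eigenvalue I would exploit the fact that $W$ is piecewise constant and construct $\psi$ explicitly. Set $\kappa = \sqrt{w-\lambda}$ and $q = \sqrt{\lambda}$. On each free interval $\psi$ is a linear combination of $\cos(qx)$ and $\sin(qx)$; on each barrier $[a_n, b_n]$ a linear combination of $e^{\pm \kappa x}$. The crucial observation is: if $\psi'(a_n)/\psi(a_n) = -\kappa$, then on $[a_n, b_n]$ the solution is the pure decaying exponential, so $\psi(b_n) = \psi(a_n)\, e^{-\kappa n^{-s}}$ and $\psi'(b_n)/\psi(b_n) = -\kappa$; on the adjacent free interval of length $2\pi/q$ (one full wavelength) both $\psi$ and $\psi'$ return to their values at $b_n$, so $\psi$ enters the next barrier with the same logarithmic derivative $-\kappa$. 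I would then adjust the boundary datum at $x = 0$ and the precise location of $a_0$ (modulo the wavelength) so that $\psi'(a_0)/\psi(a_0) = -\kappa$; by induction
\begin{equation*}
  |\psi(x)| \;\leq\; C\,|\psi(a_0)|\, \exp\!\Bigl(-\kappa \sum_{j=1}^{n} j^{-s}\Bigr), \qquad x \in [a_n, a_{n+1}],
\end{equation*}
for a constant $C$ depending only on $w/\lambda$ and bounding the oscillation of $\psi$ on the free intervals.

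Taking $s = 3/4$, the partial sum satisfies $\sum_{j=1}^n j^{-3/4} \asymp n^{1/4}$, and since $a_n \asymp n$, we obtain $|\psi(x)| \leq C' e^{-c\, x^{1/4}}$ for large $x$, so $\psi \in L^2(0,\infty)$. By construction $\psi$ solves $-\psi'' + W\psi = \lambda\psi$ on $(0,\infty)$, hence $\lambda > 0$ is an eigenvalue of the associated half-line self-adjoint operator, embedded in its absolutely continuous spectrum $[0,\infty)$. The main technical step is the inductive bookkeeping at the matching points $a_n, b_n$, namely continuity of $\psi$ and $\psi'$ together with exact phase matching across the free intervals of length $2\pi/q$; the one-time compatibility adjustment of the boundary condition and $a_0$ to start the induction is a one-parameter problem that always has a solution, after which everything reduces to a routine second-order ODE calculation.
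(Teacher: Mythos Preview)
Your proposal is correct and follows exactly the construction the paper sketches in the paragraphs immediately preceding the lemma; the paper gives no proof beyond that sketch. You have simply filled in the inductive matching at the points $a_n,b_n$ and the decay bound $|\psi(x)|\lesssim e^{-c\,x^{1/4}}$ that the paper states without detail.
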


% Now we want to show that one can perturb $-\Delta + W$ by a potential $V$ of class $\clA_0$ such that
% the operator $-\Delta + W + V$ has also an embedded, but shifted eigenvalue.
% Let $\lambda'$ be the shifted eigenvalue. We assume for simplicity that $\lambda' > \lambda.$
% In this case the wavelength of the particle on potential-free intervals will be shorter, and
% as a result we shall have a positive phase shift. In order to eliminate this shift we introduce
% between any two barriers a well of shrinking size and increasing depth. Since the
% wells are deep enough, the particle oscillates very quickly. This will enable us
% to eliminate the phase shift by a slight change of the depth of the well, so that the resulting perturbation
% $V$ will be quickly decreasing at infinity.

\subsection{An example of trace compatible space with embedded eigenvalues which satisfies Assumption~\ref{A: assumption 3}}
We recall the definitions of the weighted Hilbert $L^{2,s}(\mbR^\dm)$
and Sobolev $\euH_{m,s}(\mbR^\dm)$ spaces from \cite{Agm}. By definition, for $s \in \mbR$ and $m =0,1,2,\ldots$
$$
  L^{2,s}(\mbR^\dm) = \set{u(x) \colon (1+{x}^2)^{s/2} u(x) \in L^2(\mbR^\dm)},
$$
and
$$
  \euH_{m,s}(\mbR^\dm) = \set{u(x) \colon D^\alpha u \in L^{2,s}(\mbR^\dm), \ 0 \leq \abs{\alpha} \leq m}.
$$
Different variants of the following lemma are well-known (see e.g. \cite[Lemma 4.7.8]{Ya} and the proof of \cite[Theorem 4.2]{Agm}).
But in \cite[Theorem 4.2]{Agm} (modification of) the operator $1+JT_0(\lambda+i0)$
acts in the Hilbert space $\euH_{2,-s}$ for some $s > \frac 12,$
while \cite[Lemma 4.7.8]{Ya} requires checking the strong smoothness of perturbation.
So, we give the proof of this lemma for completeness.
\begin{lemma} \label{L: if 1+JT0 is invertible ...} %There exists $\psi \in \hilb$ such that
  %$H_1 \psi = \lambda \psi, \ \lambda \in \Lambda,$ if and only if there exists a vector $f \in \hilb$
  %such that
  Let $\clA_0 = \set{V \in L^\infty(\mbR^\dm) \colon \exists\, C>0 \ \exists\, s > \frac 12, \ \abs{V(x)} \leq C (1+x^2)^{-s/2}}.$
  Let $H_0 = H_{00}+W$ where $W$ is a short range potential and $H_{00} = -\Delta$ with $\dom(H_{00}) = \euH_2(\mbR^\dm),$
  and let $V = GJG \in \clA_0,$ where $G = \abs{V}^{1/2}.$
  If the operator $1+rJT_0(\lambda+i0), \ \lambda > 0,$ is not invertible for some $r$ then
  the equation $H_r \psi = \lambda \psi$ has an $L^2$ solution. Moreover this solution decreases faster than
  $(1+x^2)^{-s/2}$ for any $s \in \mbR.$
%The connection between $\psi $ and $f$ is
%  $\psi = R_{\lambda+i0}(H_0)G f,$ or, which is the same, $f = -G\psi.$
\end{lemma}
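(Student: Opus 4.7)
The plan is to construct the embedded eigenfunction explicitly from a nontrivial null vector of the Birman--Schwinger-type operator $1+rJT_0(\lambda+i0)$, and then to upgrade its weighted regularity by Agmon's bootstrap. Since $V=GJG \in \clA_0$ satisfies $|V(x)| \le C(1+x^2)^{-s/2}$ for some $s>1/2$, the limiting absorption principle for the short-range operator $H_0 = -\Laplace + W$ (\cite[Theorem 4.2]{Agm}) ensures that the boundary value $T_0(\lambda+i0) = GR_0(\lambda+i0)G$ is a compact operator on $L^2(\mbR^\dm)$ for $\lambda>0$. Non-invertibility of $1+rJT_0(\lambda+i0)$ thus yields, via the analytic Fredholm alternative, a nonzero $f \in L^2(\mbR^\dm)$ with
\[
  f = -rJT_0(\lambda+i0)f.
\]

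Next, define $\psi := -rR_0(\lambda+i0)Gf$, interpreted via the limiting absorption principle, so that $\psi \in \euH_{2,-s}(\mbR^\dm)$ for every $s>1/2$. Then $G\psi = -rT_0(\lambda+i0)f$, and therefore
\[
  V\psi \;=\; GJG\psi \;=\; -rGJT_0(\lambda+i0)f \;=\; Gf.
\]
In the distributional sense this gives $(H_0-\lambda)\psi = -rGf = -rV\psi$, that is $H_r\psi = \lambda\psi$. Moreover $\psi \neq 0$: otherwise $f = JG\psi = 0$, contradicting the choice of $f$.

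The remaining, and main, obstacle is to promote the a priori weighted regularity of $\psi$ into the claimed arbitrarily fast polynomial decay. The idea is a bootstrap: from $\psi \in \euH_{2,-s}$ and $V\psi = Gf$ with $f \in L^2$, the right-hand side of $(H_0-\lambda)\psi = -rGf$ already lies in $L^{2,s_0}$ for the decay rate $s_0$ of $G$, and elliptic regularity then places $\psi$ in the corresponding Sobolev class. Reapplying the equation and iterating produces successively faster decay. This is precisely Agmon's rapid-decay theorem for positive-energy embedded eigenfunctions, see \cite[Theorem 3.4]{Agm}, from which one concludes $\psi \in L^{2,s}(\mbR^\dm)$ for every $s \in \mbR$. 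The subtle point, where care is needed, is that the naive iteration appears to lose weight because $R_0(\lambda+i0)$ maps $L^{2,s}$ only into $L^{2,-s}$; Agmon's argument circumvents this by projecting off the eigenspace at $\lambda$, on the orthogonal complement of which the boundary value of the resolvent is in fact bounded from $L^{2,s}$ to $L^{2,s}$, restoring the gain in weight at each step of the iteration.
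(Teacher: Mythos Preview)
Your argument is correct and follows the same route as the paper: extract a null vector via the Fredholm alternative, set $\psi$ equal to $R_{\lambda+i0}(H_0)$ applied to $G$ times that vector, verify $H_r\psi=\lambda\psi$, and invoke Agmon for the decay. The one place the paper is more explicit is the last step: rather than citing a rapid-decay theorem for embedded eigenfunctions directly, it first verifies that $\psi$ is $\sqrt\lambda$-outgoing in the sense of \cite[Definition~4.1]{Agm} by rewriting $R_{\lambda+i0}(H_0)=R_{\lambda+i0}(H_{00})\bigl(1+WR_{\lambda+i0}(H_0)\bigr)$ via the second resolvent identity, and then applies \cite[Lemma~4.2]{Agm}. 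Your informal description of the bootstrap mechanism (projecting off an eigenspace) is not quite Agmon's---the actual input is the vanishing of the trace of an outgoing eigenfunction on the energy sphere $|\xi|^2=\lambda$---but the conclusion is identical.
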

\begin{proof} One can assume that $r = 1.$
% \\ (only if) Let $H\psi = \lambda \psi,$ where $\psi \in \hilb.$
% Then $\psi \in \euH_{2,s} (\mbR^\dm)$ for all $s \in \mbR$ (see \cite[Lemma 4.2]{Agm}
% ??? \margcom{???} I have to check that $\psi$ is a $\sqrt{\lambda}$-outgoing function).
% So, from the limiting absorption principle \cite[Theorem 4.2]{Agm}
% and the definition of a short range potential \cite[Definition 3.1]{Agm},
% it follows that the limit $\scal{G R_{\lambda+i0}(H_0)\psi}{h}$ exists for any $h \in \hilb.$
% Now, the argument of \cite[Lemma 4.7.8]{Ya} completes the proof.
%
Since $T_0(\lambda+i0)$ is compact, by Fredholm alternative the operator $1+rJT_0(\lambda+i0)$ is not invertible
if and only if  there exists $u \in L^2(\mbR^\dm)$ such that $JG R_{\lambda+i0}(H_0)G^* u = -u.$
Since for any $s > \frac 12$ $G^* u \in L^{2,s}(\mbR^\dm),$ by the limiting absorption principle
\cite[Theorem 4.2]{Agm} it follows that
$$\psi:=R_{\lambda+i0}(H_0)G^* u \in \euH_{2,-s}(\mbR^\dm).$$

%Since $G^*u \in L^{2,s}(\mbR^\dm),$ by \cite[Theorem 4.2]{Agm} $R_{\lambda+i0}(H_0)G^*u \in \euH_{2,-s}(\mbR^\dm),$
By definition of a short range potential \cite[Definition 3.1]{Agm} (see also the remark after this definition)
$W R_{\lambda+i0}(H_0)G^*u \in L^{2,-s+1+\eps}(\mbR^\dm).$ So, choosing $s>\frac 12$ so that $s < \frac 12 + \eps,$
the last equality, combined with the equality
$$
  R_{\lambda+i0}(H_0) = R_{\lambda+i0}(H_{00})(1+W R_{\lambda+i0}(H_0)),
$$
imply that $\psi$ is a $\sqrt \lambda$\tire outgoing function (\cite[Definition 4.1]{Agm}).
By \cite[Theorem 4.2]{Agm}, $\psi$ is a solution of the equation $(-\Delta + W - \lambda)\psi = G^*u.$
Since $G^*u = -V\psi,$ $\psi$ is a solution of the equation $(-\Delta + W +V)\psi = \lambda \psi.$
Since $\psi$ is a $\sqrt \lambda$\tire outgoing function, by \cite[Lemma 4.2]{Agm} $\psi \in \euH_{2,s}(\mbR^\dm)$
for any $s \in \mbR.$
\end{proof}
Concerning the inverse of this lemma, if $H_r \psi = \lambda \psi$ has a $L^{2,s}$ solution ($s > \frac 12$)
then one can show that $1+rJT_0(\lambda+i0)$ is not invertible.

We note that by \cite[Theorem 4.3]{Agm}, for any pair of operators from the affine space $-\Delta + W + \clA_0$
the resonance set $\gamma$ is closed.

% \begin{lemma} \label{L: H psi = l psi iff ...} Let $W \in L^\infty(\mbR^\dm)$ be a short range potential.
% For the trace compatible space $-\Delta + W + C_c(\mbR^\dm)$
% the following conditions hold:
% \\ (i) $G (1+ \abs{H})^{-1/2} \in \LpH{2};$
% \\ (ii) if $\lambda$ is not an eigenvalue of $H$ then the limit
%   $s$-$\lim_{y \to 0+} (J^{-1} + T(\lambda + iy))^{-1}$ exists;
% \\ (iii) if $\lambda$ is not an eigenvalue of $H$ then
% the limit $\LpH{1}$-$\lim_{y\to 0+} B(\lambda+iy) =: B(\lambda + i0)$ exists.
% \end{lemma}
% \begin{proof} \margproof
% \end{proof}

\begin{prop} If $W$ is a short range potential from Lemma \ref{L: example of SR potential with e.e.},
then for $\dm=1$ the affine space $-\Delta + W + \clA_0$ satisfies Assumption \ref{A: assumption 3}.
\end{prop}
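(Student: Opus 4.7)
The plan is to verify the four clauses of Assumption~\ref{A: assumption 3} in turn for the affine space $-\Delta + W + \clA_0$ in dimension $\dm=1$, combining the explicit one-dimensional resolvent kernel, Agmon's limiting absorption principle, and the lemmas already proved in the paper. For clause (i), I would use the explicit free resolvent kernel $R_z(-\Delta)(x,y)=\frac{i}{2\sqrt{z}}e^{i\sqrt{z}|x-y|}$, which is uniformly bounded and exponentially decaying off-diagonal for $\Im z>0$; via the second resolvent identity and the boundedness and short-range decay of $W$, the kernel of $R_z(H_0)$ enjoys the same qualitative bound. Combining this with the polynomial decay $|V(x)|\leq C\langle x\rangle^{-s}$, $s>\tfrac12$, that defines $\clA_0$, a standard one-dimensional Hilbert-Schmidt/trace-class kernel computation yields $VR_z(H_0)\in\LpH{p}$, and Lemma~\ref{L: Tr(V Rz(Hr)) is continuous} then propagates this to all $H_r$, $r\in[0,1]$; the companion estimate $G(1+|H_r|)^{1/2}\in\LpH{2p}$ is handled in parallel.

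Clause (ii) is exactly the setting of Agmon's limiting absorption principle \cite[Theorem~4.2]{Agm}: the weighted resolvent $\langle x\rangle^{-\sigma}R_{\lambda+i0}(H_0)\langle x\rangle^{-\sigma}$ has non-tangential boundary values in operator norm for every $\sigma>\tfrac12$ and every $\lambda$ in the open set $\Lambda(H_0):=(0,\infty)\setminus\sigma_p(H_0)$; the exceptional set is the discrete positive point spectrum of $H_0$ (discreteness following from real-analyticity of the underlying Fredholm determinant away from thresholds). Sandwiching by $G=|V|^{1/2}$ then gives $T_0(\lambda+i0)\in\LpH{\infty}$. Clause (iii) is essentially free in dimension one: the spectral fiber $\hilb_\lambda$ over each $\lambda>0$ is only two-dimensional, so $B_0(\lambda\pm i0)=\pi Z_0(\lambda)^*Z_0(\lambda)$ factors through this two-dimensional space and has rank at most two, hence lies in every Schatten ideal $\LpH{\tilde p}$. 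Finally, clause (iv) is precisely the content of Lemma~\ref{L: if 1+JT0 is invertible ...}, whose proof has already been given.

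The main technical obstacle is clause (i): the decay rate $s>\tfrac12$ in $\clA_0$ is the borderline one-dimensional condition that puts $V$ in $L^2(\mbR)$, and extracting full trace-class membership (rather than merely Hilbert-Schmidt) requires a careful factorisation through $G$, possibly combined with Agmon-type weighted resolvent estimates to absorb an extra $\langle x\rangle^{-\epsilon}$ worth of decay. The remaining clauses rely either on the limiting absorption principle or on the low-dimensional structure of the spectral fibration, both of which are essentially off-the-shelf in this setting.
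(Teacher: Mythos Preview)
Your proposal is correct in outline and matches the paper on clauses (ii) and (iv): both invoke the limiting absorption principle for (ii) and Lemma~\ref{L: if 1+JT0 is invertible ...} for (iv). The approaches diverge on (i) and (iii). The paper dispatches both at once by citing \cite[Theorem~B.9.2]{Si82BAMS} (Simon's Schr\"odinger-semigroup trace-ideal criteria), obtaining $p=1$ directly; you instead propose an explicit kernel computation for (i) and a finite-rank argument for (iii). Your treatment of (iii) --- that $B_0(\lambda\pm i0)=\pi Z_0(\lambda)^*Z_0(\lambda)$ has rank at most two because the fibre $\hilb_\lambda$ is two-dimensional --- is cleaner and more elementary than invoking Simon, and it gives membership in every $\LpH{\tilde p}$ for free. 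For (i), your direct kernel route is more transparent but, as you correctly flag, genuinely delicate at the borderline decay $s>\tfrac12$: the naive factorisation $VR_z=G\cdot JGR_z$ with $G=|V|^{1/2}\in L^2$ yields Hilbert--Schmidt but not obviously trace class unless $s>1$, and the extra $\langle x\rangle^{-\epsilon}$ you hope to squeeze from weighted resolvent bounds gives operator-norm control, not Schatten gain. The paper sidesteps this by appealing to Simon's black-box result; if you want a self-contained argument you will either need to restrict to $s>1$, settle for $p=2$ (which still verifies Assumption~\ref{A: assumption 3} as stated, though the downstream Theorem requires $p=1$), or reproduce the relevant part of \cite{Si82BAMS}.
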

\begin{proof} It is known that the condition (ii) of the Assumption holds, see e.g. \cite[\S 2]{BYa92AA2}, \cite{Agm}.
%The condition (ii) holds by \cite[\S 2]{BYa92AA2}.
The condition (iv) holds by Lemma \ref{L: if 1+JT0 is invertible ...}.
%The condition (iv) holds by \cite[Theorem 3.1]{Agm}.
\cite[Theorem B.9.2]{Si82BAMS} implies that the condition (i) and (iii) hold with $p = 1.$
\end{proof}

%
%\appendix
%\input ../ListOfRef/MyListOfRef
\mathsurround 0pt
\ndef{\AndSoOn}{$\dots$}

%\end{document}
%  End of MyListOfRef.tex
\end{document}